\DeclareMathAlphabet{\mathcal}{OMS}{cmsy}{m}{n}
\def\ps@pprintTitle{%
 \let\@oddhead\@empty
 \let\@evenhead\@empty
 \def\@oddfoot{\centerline{\thepage}}%
 \let\@evenfoot\@oddfoot}
\newcommand{\bbC}{\mathbb{C}}
\newcommand{\bbF}{\mathbb{F}}
\newcommand{\bbR}{\mathbb{R}}
\newcommand{\bbZ}{\mathbb{Z}}
\newcommand{\bfA}{\mathbf{A}}
\newcommand{\bfB}{\mathbf{B}}
\newcommand{\bfD}{\mathbf{D}}
\newcommand{\bfG}{\mathbf{G}}
\newcommand{\bfH}{\mathbf{H}}
\newcommand{\bfI}{\mathbf{I}}
\newcommand{\bfJ}{\mathbf{J}}
\newcommand{\bfS}{\mathbf{S}}
\newcommand{\bfU}{\mathbf{U}}
\newcommand{\bfy}{\mathbf{y}}
\newcommand{\bfz}{\mathbf{z}}
\newcommand{\bfzero}{\boldsymbol{0}}
\newcommand{\bfone}{\boldsymbol{1}}
\newcommand{\bfphi}{\boldsymbol{\varphi}}
\newcommand{\bfPhi}{\boldsymbol{\Phi}}
\newcommand{\bfPi}{\boldsymbol{\Pi}}
\newcommand{\calB}{\mathcal{B}}
\newcommand{\calD}{\mathcal{D}}
\newcommand{\calG}{\mathcal{G}}
\newcommand{\calV}{\mathcal{V}}
\newcommand{\rmC}{\mathrm{C}}
\newcommand{\rmi}{\mathrm{i}}
\newcommand{\rmN}{\mathrm{N}}
\newcommand{\Tr}{\operatorname{Tr}}
\newcommand{\SRG}{\operatorname{SRG}}
\newcommand{\BIBD}{\operatorname{BIBD}}
\newcommand{\RBIBD}{\operatorname{RBIBD}}
\newcommand{\abs}[1]{|{#1}|}
\newcommand{\bigabs}[1]{\bigl|{#1}\bigr|}
\newcommand{\bigbracket}[1]{\bigl[{#1}\bigr]}
\newcommand{\set}[1]{\{{#1}\}}
\newcommand{\bigset}[1]{\bigl\{{#1}\bigr\}}
\newcommand{\Bigset}[1]{\Bigl\{{#1}\Bigr\}}
\newcommand{\biggset}[1]{\biggl\{{#1}\biggr\}}
\newcommand{\norm}[1]{\|{#1}\|}
\newcommand{\ip}[2]{\langle{#1},{#2}\rangle}
\newcommand{\biggip}[2]{\biggl\langle{#1},{#2}\biggr\rangle}
\newcommand{\alphi}{\renewcommand{\labelenumi}{(\alph{enumi})}}
\newcommand{\romani}{\renewcommand{\labelenumi}{(\roman{enumi})}}
\newcommand{\romanii}{\renewcommand{\labelenumii}{(\roman{enumii})}}
\newtheorem{theorem}{Theorem}[section]
\newtheorem{corollary}[theorem]{Corollary}
\newtheorem{lemma}[theorem]{Lemma}
\theoremstyle{definition}
\newtheorem{definition}[theorem]{Definition}
\begin{document}
\begin{frontmatter}
\title{Equiangular tight frames with centroidal symmetry}

\author[AFIT]{Matthew Fickus}
\ead{Matthew.Fickus@gmail.com}
\author[Cincy]{John Jasper}
\author[AFIT]{Dustin G.\ Mixon}
\author[AFIT]{Jesse D.\ Peterson}
\author[AFIT]{Cody E.\ Watson}

\address[AFIT]{Department of Mathematics and Statistics, Air Force Institute of Technology, Wright-Patterson AFB, OH 45433}
\address[Cincy]{Department of Mathematical Sciences, University of Cincinnati, Cincinnati, OH 45221}


\begin{abstract}
An equiangular tight frame (ETF) is a set of unit vectors whose coherence achieves the Welch bound, and so is as incoherent as possible.
Though they arise in many applications, only a few methods for constructing them are known.
Motivated by the connection between real ETFs and graph theory, we introduce the notion of ETFs that are symmetric about their centroid.
We then discuss how well-known constructions, such as harmonic ETFs and Steiner ETFs, can have centroidal symmetry.
Finally, we establish a new equivalence between centroid-symmetric real ETFs and certain types of strongly regular graphs (SRGs).
Together, these results give the first proof of the existence of certain SRGs, as well as the disproofs of the existence of others.
\end{abstract}

\begin{keyword}
equiangular tight frame \sep strongly regular graph \MSC[2010] 42C15, 05E30
\end{keyword}
\end{frontmatter}
\section{Introduction}
Let $m\leq n$ be positive integers and let $\set{\bfphi_i}_{i=1}^{n}$ be a sequence of unit vectors in $\bbF^m$ where the field $\bbF$ is either the real line $\bbR$ or the complex plane $\bbC$.
The quantity $\max_{i\neq j}\abs{\ip{\bfphi_i}{\bfphi_j}}$ is known as the \textit{coherence} of $\set{\bfphi_i}_{i=1}^{n}$.
In certain real-world applications, one seeks a sequence of $n$ unit norm vectors in $\bbF^m$ whose coherence is as small as possible.
Geometrically speaking, this is equivalent to packing lines in Euclidean space: for any real unit vectors $\bfphi_i$ and $\bfphi_j$, we have $\abs{\ip{\bfphi_i}{\bfphi_j}}=\cos(\theta_{i,j})$ where $\theta_{i,j}$ is the interior angle of the lines spanned by $\bfphi_i$ and $\bfphi_j$;
finding unit vectors $\set{\bfphi_i}_{i=1}^{n}$ with minimal coherence is thus equivalent to arranging $n$ lines so that the minimum pairwise angle between any two lines is as large as possible.
The \textit{Welch bound} is the most famous example of an explicit lower bound on the coherence~\cite{Welch74,StrohmerH03}:
\begin{theorem}
\label{theorem.Welch bound}
Given positive integers $m\leq n$, for any unit norm vectors $\set{\bfphi_i}_{i=1}^{n}$ in $\bbF^m$ we have
\begin{equation}
\label{equation.Welch bound}
\max_{i\neq j}\abs{\ip{\bfphi_i}{\bfphi_j}}
\geq
\sqrt{\tfrac{n-m}{m(n-1)}},
\end{equation}
where equality holds if and only if $\set{\bfphi_i}_{i=1}^{n}$ is an \textit{equiangular tight frame} (ETF) for $\bbF^m$.
\end{theorem}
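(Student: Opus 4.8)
The plan is to pass to the Gram matrix and apply Cauchy--Schwarz twice: once to bound the total squared coherence from below, and once to pass from an average to a maximum. First I would collect the vectors as the columns of the synthesis operator $\bfPhi=[\bfphi_1\ \cdots\ \bfphi_n]$, so that the frame operator $\bfS=\bfPhi\bfPhi^*$ is an $m\times m$ positive semidefinite matrix while the Gram matrix $\bfPhi^*\bfPhi$ is $n\times n$ with $(i,j)$ entry equal to $\ip{\bfphi_i}{\bfphi_j}$ up to conjugation. Since each $\bfphi_i$ is a unit vector, the Gram matrix has all-ones diagonal, giving $\Tr(\bfS)=\Tr(\bfPhi^*\bfPhi)=n$. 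Because $\bfPhi\bfPhi^*$ and $\bfPhi^*\bfPhi$ share the same nonzero eigenvalues, and because the Gram matrix is Hermitian, I also get $\Tr(\bfS^2)=\Tr((\bfPhi^*\bfPhi)^2)=\sum_{i,j}\abs{\ip{\bfphi_i}{\bfphi_j}}^2=n+\sum_{i\neq j}\abs{\ip{\bfphi_i}{\bfphi_j}}^2$.

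The central inequality is $\Tr(\bfS)^2\leq m\,\Tr(\bfS^2)$, which is Cauchy--Schwarz applied to the $m$ eigenvalues of $\bfS$ paired against the all-ones vector. Combining it with the two trace identities yields $n^2\leq m\bigl(n+\sum_{i\neq j}\abs{\ip{\bfphi_i}{\bfphi_j}}^2\bigr)$, that is, $\sum_{i\neq j}\abs{\ip{\bfphi_i}{\bfphi_j}}^2\geq \tfrac{n(n-m)}{m}$. Since there are exactly $n(n-1)$ off-diagonal terms, the maximum of the summands is at least their average, so $\max_{i\neq j}\abs{\ip{\bfphi_i}{\bfphi_j}}^2\geq \tfrac{n-m}{m(n-1)}$, and taking square roots gives \eqref{equation.Welch bound}.

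For the equality characterization I would simply track when each of the two inequalities is tight, which is the only part of the argument requiring care. Equality in $\Tr(\bfS)^2\leq m\,\Tr(\bfS^2)$ forces all eigenvalues of $\bfS$ to coincide; since their sum is $\Tr(\bfS)=n>0$, the common value is forced to be $\tfrac{n}{m}$, so $\bfS=\tfrac{n}{m}\bfI$ and $\set{\bfphi_i}_{i=1}^n$ is a tight frame (in particular, $\bfS$ is invertible, so the vectors span $\bbF^m$). Equality in the averaging step forces every off-diagonal inner product to have the same modulus $\sqrt{\tfrac{n-m}{m(n-1)}}$, i.e.\ the frame is equiangular. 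Conversely, if the frame is both tight and equiangular, both inequalities become equalities and the bound is attained. The one subtlety is confirming that the two equality conditions together are precisely the definition of an ETF and that the tight constant cannot degenerate to something other than $\tfrac{n}{m}$; both points fall out immediately from $\Tr(\bfS)=n$.
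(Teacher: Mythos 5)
Your proof is correct and is the standard double-Cauchy--Schwarz argument (bounding $\Tr(\bfPhi\bfPhi^*)^2\leq m\Tr((\bfPhi\bfPhi^*)^2)$ and then passing from the average to the maximum of the off-diagonal Gram entries); the paper itself does not prove Theorem~\ref{theorem.Welch bound} but defers to the short proof in~\cite{JasperMF14}, which proceeds in essentially this same way. Your tracking of the two equality conditions correctly recovers tightness and equiangularity, so nothing is missing.
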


\setlength{\arraycolsep}{2pt}

To fully understand this fact, we first establish some notation and terminology.
For any vectors $\set{\bfphi_i}_{i=1}^{n}$ in $\bbF^m$, the corresponding \textit{synthesis operator} is the $m\times n$ matrix $\bfPhi$ which has the vectors $\set{\bfphi_i}_{i=1}^{n}$ as its columns, namely the operator $\bfPhi:\bbF^n\rightarrow\bbF^m$, $\bfPhi\bfy=\sum_{i=1}^{n}\bfy(i)\bfphi_i$.
Composing $\bfPhi$ with its $n\times m$ adjoint (conjugate transpose) $\bfPhi^*$ yields the $m\times m$ \textit{frame operator} $\bfPhi\bfPhi^*$
as well as the $n\times n$ \textit{Gram matrix} $\bfPhi^*\bfPhi$ whose $(i,j)$th entry is $(\bfPhi^*\bfPhi)(i,j)=\ip{\bfphi_i}{\bfphi_j}$.
We say $\set{\bfphi_i}_{i=1}^{n}$ is a \textit{tight frame} if $\bfPhi$ is perfectly conditioned, that is, if there exists $\alpha>0$ such that $\bfPhi\bfPhi^*=\alpha\bfI$.
This is equivalent to having the rows of $\bfPhi$ be orthogonal and equal norm.
We say $\set{\bfphi_i}_{i=1}^{n}$ is \textit{equiangular} when each $\bfphi_i$ has unit norm and the value of $\abs{\ip{\bfphi_i}{\bfphi_j}}$ is constant over all choices of $i\neq j$,
namely when the diagonal entries of $\bfPhi^*\bfPhi$ are $1$ while the off-diagonal entries have constant modulus.

An ETF is a tight frame whose vectors are equiangular.
This means the synthesis operator $\bfPhi$ has equal-norm orthogonal rows and unit-norm equiangular columns.
Note that in this case the tight frame constant $\alpha$ is necessarily the frames \textit{redundancy} $\frac nm$ since $m\alpha=\Tr(\alpha\bfI)=\Tr(\bfPhi\bfPhi^*)=\Tr(\bfPhi^*\bfPhi)=\sum_{i=1}^{n}\norm{\bfphi_i}^2=n$.
Theorem~\ref{theorem.Welch bound} states that Welch's lower bound on coherence is achieved precisely when the columns of $\bfPhi$ form an ETF for $\bbF^m$;
see~\cite{JasperMF14} for a short proof of this result.

Because of their minimal coherence, ETFs are useful in a number of real-world applications, including waveform design for wireless communication~\cite{StrohmerH03}, compressed sensing~\cite{BajwaCM12,BandeiraFMW13} and algebraic coding theory~\cite{JasperMF14}.
They are special instances of \textit{equi-chordal} and \textit{equi-isoclinic tight fusion frames}, topics which have garnered interest in recent years~\cite{KutyniokPCL09,BachocE13}.
In spite of these facts, only a few methods for constructing ETFs are known.
Real ETFs in particular are equivalent to a certain class of very symmetric graphs known as \textit{strongly regular graphs} (SRGs).
Much of the work behind this equivalence was pioneered by J.~J.~Seidel and his contemporaries~\cite{CorneilM91};
see~\cite{Waldron09} for a modern discussion of these ideas.
For the frame community, this equivalence is invaluable since it allows us to leverage the rich SRG literature,
notably the SRG existence tables in a book chapter~\cite{Brouwer07} and website~\cite{Brouwer15} by Brouwer.

In this paper, we discuss special classes of ETFs that possess certain types of symmetry.
Specifically, we consider ETFs with the property that the all-ones vector $\bfone$ is an eigenvector of the Gram matrix $\bfPhi^*\bfPhi$.
In the next section, we motivate this concept and review several well-known facts about ETFs that we will use later on.
In Section~3, we prove that having $\bfone$ be an eigenvector of $\bfPhi^*\bfPhi$ is equivalent to the ETF being symmetric about its centroid in one of two possible ways.
We then discuss the degree to which the \textit{harmonic ETFs} of~\cite{StrohmerH03,XiaZG05,DingF07} and the~\textit{Steiner ETFs} of~\cite{FickusMT12,JasperMF14} have such centroidal symmetries.
In the fourth section, we prove that real ETFs with centroidal symmetry are equivalent to certain types of SRGs.
Traditionally, $m\times n$ real ETFs are equated to certain SRGs on $n-1$ vertices; we show how $m\times n$ real ETFs with centroidal symmetry can moreover be equated to certain SRGs on $n$ vertices.
This relationship was briefly explored in Section~4.4 of~\cite{Yu14},
and our proof techniques here are similar to those used in a related study of non-equiangular \textit{two-distance} tight frames~\cite{BargGOY15}.
Our new equivalence allows frame theory and combinatorial design to better inform each other.
It is an abstraction of the realization that the method for constructing $n$-vector Steiner ETFs~\cite{FickusMT12} is essentially identical to Goethals and Seidel's method for constructing $n$-vertex SRGs~\cite{GoethalsS70}.
Exploiting this new equivalence, we give the first proof of the existence of certain SRGs and the nonexistence of others.

\section{Preliminaries and Motivation}

In this section we review some basic concepts and facts about ETFs and SRGs that we will need later on.
All of the results given in this section are well known.
We begin by discussing several simple ways of modifying an existing ETF $\set{\bfphi_i}_{i=1}^{n}$ into a different one.
For instance, $\set{\bfU\bfphi_{\sigma(i)}}_{i=1}^{n}$ is an ETF for any unitary matrix $\bfU\in\bbF^{m\times m}$ and permutation $\sigma:[n]\rightarrow[n]$.
We can also \textit{sign} our ETF:  $\set{z_i\bfphi_i}_{i=1}^{n}$ is an ETF for any sequence of unimodular scalars $\set{z_i}_{i=1}^{n}$.
Finally, we can produce an $(n-m)\times n$ ETF from an $m\times n$ ETF by taking any one of its \textit{Naimark complements}:
\begin{lemma}
\label{lemma.Naimark complements}
Let $\bfPhi$ be an $m\times n$ synthesis operator of an ETF with $m<n$, let $\alpha=\frac nm$, and let $\tilde{\bfPhi}$ be any $(n-m)\times n$ matrix whose rows form an orthogonal basis for the orthogonal complement of the row space of $\bfPhi$ and have squared norm $\tilde{\alpha}=\frac{n-m}{m}$.
Then $\tilde{\bfPhi}$ is the synthesis operator of an ETF and moreover, the two Gram matrices satisfy
\begin{equation*}
\bfI
=\tfrac1{\alpha}\bfPhi^*\bfPhi+\tfrac1{\tilde{\alpha}}\tilde{\bfPhi}^*\tilde{\bfPhi}.
\end{equation*}
\end{lemma}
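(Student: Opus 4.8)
The plan is to recognize both normalized Gram matrices as orthogonal projections onto complementary subspaces of $\bbF^n$, so that the asserted identity merely expresses that these projections sum to the identity; the ETF structure of $\tilde\bfPhi$ then drops out of that same identity.

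First I would exploit tightness. Because $\bfPhi$ synthesizes an ETF, its rows are orthogonal with common squared norm $\alpha$, so $\bfPhi\bfPhi^* = \alpha\bfI_m$. Writing $P = \tfrac1\alpha\bfPhi^*\bfPhi$, one checks $P^* = P$ and $P^2 = \tfrac1{\alpha^2}\bfPhi^*(\bfPhi\bfPhi^*)\bfPhi = \tfrac1\alpha\bfPhi^*\bfPhi = P$, so $P$ is the orthogonal projection onto the ($m$-dimensional) row space of $\bfPhi$. By hypothesis the rows of $\tilde\bfPhi$ are also orthogonal, with common squared norm $\tilde\alpha$, giving $\tilde\bfPhi\tilde\bfPhi^* = \tilde\alpha\bfI_{n-m}$; in particular $\tilde\bfPhi$ is itself a tight frame, and the same computation shows that $Q = \tfrac1{\tilde\alpha}\tilde\bfPhi^*\tilde\bfPhi$ is the orthogonal projection onto the row space of $\tilde\bfPhi$.

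Next I would use the defining property of the complement: the row space of $\tilde\bfPhi$ is the orthogonal complement of the row space of $\bfPhi$, and since the two dimensions $m$ and $n-m$ add up to $n$, these subspaces are genuinely complementary in $\bbF^n$. Therefore $P$ and $Q$ project onto mutually orthogonal complements, whence $P + Q = \bfI$, which is exactly the stated Gram identity.

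Finally, rewriting that identity as $\tilde\bfPhi^*\tilde\bfPhi = \tilde\alpha(\bfI - \tfrac1\alpha\bfPhi^*\bfPhi)$ lets me read off the columns of $\tilde\bfPhi$: each diagonal entry equals the single constant $\tilde\alpha(1 - \tfrac1\alpha)$, so the columns share a common norm, while each off-diagonal entry is $-\tfrac{\tilde\alpha}{\alpha}$ times the corresponding entry of $\bfPhi^*\bfPhi$, whose modulus is constant because $\bfPhi$ is equiangular; hence $\tilde\bfPhi$ is equiangular too. Combined with tightness, this shows that, after rescaling its columns to unit norm, $\tilde\bfPhi$ is the synthesis operator of an ETF. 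I expect the one place demanding care to be the complementarity step: one must argue that $Q$ equals $\bfI - P$ on the nose, not merely that it is \emph{some} projection, and this rests on the dimension count forcing the two row spaces to exhaust $\bbF^n$. Everything else is a routine entrywise inspection once the projection picture is in place.
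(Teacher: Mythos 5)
Your proof is correct and is essentially the paper's own argument in different packaging: the paper stacks $\alpha^{-1/2}\bfPhi$ atop $\tilde{\alpha}^{-1/2}\tilde{\bfPhi}$ to get a matrix with orthonormal rows, hence unitary, hence with orthonormal columns, which is precisely your statement that the two normalized Gram matrices are complementary orthogonal projections summing to $\bfI$; the entrywise reading of that identity to get equiangularity is then identical in both. Your parenthetical care about the column normalization (the stated $\tilde{\alpha}=\tfrac{n-m}{m}$ forces a rescaling to reach unit-norm columns) is a fair and correct observation, but it does not change the substance of the argument.
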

The standard proof of this result is given in many places; see~\cite{FickusW15} for example.
The main idea is that taking $\tilde{\bfPhi}$ in this fashion makes the rows of the matrix
\begin{equation*}
\left[\begin{array}{r}\alpha^{-\frac12}\bfPhi\\\tilde{\alpha}^{-\frac12}\tilde{\bfPhi}\end{array}\right]
\end{equation*}
into an orthonormal basis for $\bbF^m$; this matrix is thus unitary and so also has orthonormal columns, implying the Gram matrix relation and thus the equiangularity of $\tilde{\bfPhi}$.

Most known direct constructions of ETFs fall into one of two categories: constructions of synthesis operators~\cite{XiaZG05,DingF07,FickusMT12,JasperMF14}, and constructions of Gram matrices~\cite{StrohmerH03,HolmesP04,Waldron09}.
Though our work here involves both approaches, most of the motivation for it comes from the latter, which relies on the following well-known result:
\begin{lemma}
\label{lemma.Gram matrix ETF characterization}
An $n\times n$ self-adjoint matrix $\bfG$ is the Gram matrix of an ETF if and only if
\begin{enumerate}
\romani
\item $\bfG^2=\alpha\bfG$ for some $\alpha\in\bbF$,
\item $\bfG(i,i)=1$ for all $i$,
\item there exists $\beta\in\bbF$ such that $\abs{\bfG(i,j)}=\beta$ for all $i\neq j$.
\end{enumerate}
\end{lemma}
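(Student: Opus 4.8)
The plan is to prove both directions by translating the three algebraic conditions on $\bfG$ into the defining properties of an ETF through the identity $\bfG=\bfPhi^*\bfPhi$. The forward implication is almost immediate from the definitions reviewed above. If $\bfG=\bfPhi^*\bfPhi$ is the Gram matrix of an ETF, then $\bfG(i,i)=\ip{\bfphi_i}{\bfphi_i}=\norm{\bfphi_i}^2=1$ gives (ii), the constant modulus of the off-diagonal entries $\bfG(i,j)=\ip{\bfphi_i}{\bfphi_j}$ gives (iii), and the tight frame condition $\bfPhi\bfPhi^*=\alpha\bfI$ yields (i) via $\bfG^2=\bfPhi^*\bfPhi\bfPhi^*\bfPhi=\bfPhi^*(\alpha\bfI)\bfPhi=\alpha\bfPhi^*\bfPhi=\alpha\bfG$.

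For the converse, suppose $\bfG$ satisfies (i)--(iii); I would construct a synthesis operator realizing $\bfG$. First I would pin down the scalar $\alpha$. Since $\bfG$ is self-adjoint its eigenvalues are real, and (ii) forces $\Tr(\bfG)=n>0$, so $\bfG$ is nonzero and has a nonzero (real) eigenvalue $\lambda$; applying (i) to an associated eigenvector shows $\lambda^2=\alpha\lambda$, hence $\alpha=\lambda$ is a nonzero real number. Every eigenvalue of $\bfG$ then lies in $\set{0,\alpha}$, and counting $\Tr(\bfG)=\alpha\cdot\operatorname{rank}(\bfG)=n$ shows $\alpha>0$. Consequently $\tfrac1\alpha\bfG$ is a self-adjoint idempotent, that is, an orthogonal projection of rank $m:=\operatorname{rank}(\bfG)=\tfrac n\alpha$.

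With this in hand the factorization is routine: writing the rank-$m$ projection as $\tfrac1\alpha\bfG=\bfU\bfU^*$ for some $\bfU\in\bbF^{n\times m}$ with orthonormal columns, I would set $\bfPhi:=\sqrt\alpha\,\bfU^*\in\bbF^{m\times n}$. Then $\bfPhi^*\bfPhi=\alpha\bfU\bfU^*=\bfG$, so by (ii) and (iii) the columns $\set{\bfphi_i}_{i=1}^n$ of $\bfPhi$ are unit-norm and equiangular, while $\bfPhi\bfPhi^*=\alpha\bfU^*\bfU=\alpha\bfI$ makes them a tight frame; hence $\bfPhi$ synthesizes an ETF with Gram matrix $\bfG$. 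I expect the one point requiring genuine care to be the converse's opening step: recognizing that condition (i) is exactly the algebraic shadow of the tight frame equation $\bfPhi\bfPhi^*=\alpha\bfI$, and in particular verifying that $\alpha$ is forced to be a positive real (rather than a general element of $\bbF=\bbC$) so that both $\sqrt\alpha$ and the rank--trace count are legitimate. Once $\tfrac1\alpha\bfG$ is identified as an orthogonal projection, the remaining construction is bookkeeping.
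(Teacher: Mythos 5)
Your proof is correct and follows essentially the same route as the paper: the forward direction is the identical computation $\bfG^2=\bfPhi^*(\bfPhi\bfPhi^*)\bfPhi=\alpha\bfG$, and your converse (factoring $\tfrac1\alpha\bfG$ as $\bfU\bfU^*$ and setting $\bfPhi=\sqrt{\alpha}\,\bfU^*$) is precisely the paper's construction of $\bfPhi$ as $\sqrt{\alpha}$ times the adjoint of an orthonormal basis for the $\alpha$-eigenspace, with $m=\Tr(\bfG)/\alpha=n/\alpha$. Your added care in verifying that $\alpha$ is forced to be a positive real is a welcome detail the paper leaves implicit.
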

The standard proof of this result is given in~\cite{FickusW15}, for example.
In brief, the ($\Rightarrow$) direction is straightforward, realizing that if $\bfG=\bfPhi^*\bfPhi$ where $\bfPhi\bfPhi^*=\alpha\bfI$ then $\bfG^2=\bfPhi^*(\bfPhi\bfPhi^*)\bfPhi=\alpha\bfG$.
For the ($\Leftarrow$) direction, let $\bfPhi$ be $\sqrt{\alpha}$ times the adjoint of the matrix whose columns form an orthonormal basis for the eigenspace of $\bfG$ corresponding to eigenvalue $\alpha$.
Note that with this approach, the dimension $m$ of the space the ETF spans is the multiplicity of $\alpha$ as an eigenvalue of $\bfG$;
since the only eigenvalues of $\bfG$ are $0$ and $\alpha$, this is easily computed as $m=\frac{\Tr(\bfG)}{\alpha}=\frac{n}{\alpha}$.

The Gram-matrix approach for constructing ETFs is especially attractive in the real case since in that setting the off-diagonal entries of $\bfG$ can only have value $\pm\beta$.
We can thus convert $\bfG$ into the adjacency matrix $\bfA$ of a graph by changing its diagonal entries to zero while changing the off-diagonal values of $\beta$ and $-\beta$ to $1$ and $0$, respectively:
\begin{equation}
\label{equation.ETF to SRG conversion}
\bfA=\tfrac1{2\beta}\bfG-\tfrac{\beta+1}{2\beta}\bfI+\tfrac12\bfJ.
\end{equation}
Here, $\bfJ$ is an all-ones matrix.
That is, $\bfJ=\bfone\bfone^*$ where $\bfone$ is an all-ones vector.
The graphs that arise from real ETFs according to~\eqref{equation.ETF to SRG conversion} are special:
solving for $\bfG$ gives $\bfG=2\beta\bfA+(\beta+1)\bfI-\beta\bfJ$, at which point the fact that $\bfG^2=\alpha\bfG$
implies that $\bfA$ necessarily satisfies
\begin{equation}
\label{equation.ETF condition on adjacency matrix}
[2\beta\bfA+(\beta+1)\bfI-\beta\bfJ]^2=\alpha(2\beta\bfA+(\beta+1)\bfI-\beta\bfJ).
\end{equation}
Conversely, if $\bfA$ is any adjacency matrix that satisfies~\eqref{equation.ETF condition on adjacency matrix} for some scalars $\alpha$ and $\beta$,
then the matrix $\bfG$ defined as $2\beta\bfA+(\beta+1)\bfI-\beta\bfJ$ satisfies all three conditions of Lemma~\ref{lemma.Gram matrix ETF characterization} and thus yields an ETF.
As such, condition~\eqref{equation.ETF condition on adjacency matrix} provides a graph-theoretic characterization of real ETFs.

To simplify~\eqref{equation.ETF condition on adjacency matrix}, it would be nice if $\bfA$ were \textit{regular}, namely if every vertex of the corresponding graph has the same number of neighbors, denoted $k$.
This happens precisely when $\bfA\bfone=k\bfone$, namely when $\bfA\bfJ=k\bfJ=\bfJ\bfA$.
In particular, having this property would allow us to expand the square in~\eqref{equation.ETF condition on adjacency matrix}.
Unfortunately, it turns out that adjacency matrices arising from real ETFs according to~\eqref{equation.ETF to SRG conversion} are not regular, in general.
The good news is that there are two ways of bypassing this issue.
The first way is the traditional method summarized in~\cite{Waldron09}.
The second way is new, and is the subject of this paper.
Both ways involve graphs which are strongly regular.

\subsection{Strongly regular graphs and their traditional relationship with real equiangular tight frames}

In general, a $k$-regular graph on $v$ vertices is said to be \textit{strongly regular} with nonnegative integer parameters $(v,k,\lambda,\mu)$ if any two neighbors have exactly $\lambda$ neighbors in common while any two nonneighbors have exactly $\mu$ neighbors in common.
A strongly regular graph with such parameters is often denoted an $\SRG(v,k,\lambda,\mu)$.
This combinatorial definition has a simple algebraic characterization in terms of the graph's $v\times v$ adjacency matrix $\bfA$:
since $\bfA^2(i,j)$ counts the number of two-step paths from vertex $i$ to $j$, a given graph is an $\SRG(v,k,\lambda,\mu)$ if and only if
\begin{equation}
\label{equation.pre definition of srg}
\bfA^2(i,j)=\left\{\begin{array}{cl}k,&\ i=j,\\\lambda,&\ i\neq j, \bfA(i,j)=1,\\\mu,&\ i\neq j, \bfA(i,j)=0,\end{array}\right.
\end{equation}
namely if and only if
\begin{equation}
\label{equation.definition of srg}
\bfA^2
=(\lambda-\mu)\bfA+(k-\mu)\bfI+\mu\bfJ.
\end{equation}
Because of its simplicity, we take~\eqref{equation.definition of srg} as our definition of an SRG.
To show that a given adjacency matrix $\bfA$ corresponds to an SRG, note it suffices to show that there exist real numbers $x,y,z$ such that $\bfA^2=x\bfA+y\bfI+z\bfJ$.
Indeed, in this case we can define $k=y+z$, $\lambda=x+z$ and $\mu=z$ to obtain~\eqref{equation.definition of srg} which is equivalent to~\eqref{equation.pre definition of srg}; excluding $\bfA=\bfzero$ and $\bfA=\bfJ-\bfI$ (both of which are trivially SRGs) we have that each number $k$, $\lambda$ and $\mu$ appears at least once as an entry of $\bfA^2$, proving they are nonnegative integers.

In this work, we only need two basic facts about SRGs beyond~\eqref{equation.definition of srg} itself.
The first is a well-known relationship between their four parameters:
since $\bfA\bfone=k\bfone$, conjugating~\eqref{equation.definition of srg} by $\bfone$ gives
\begin{equation*}
k^2v
=\bfone^*\bfA^2\bfone
=(\lambda-\mu)\bfone^*\bfA\bfone+(k-\mu)\bfone^*\bfI\bfone+\mu\bfone^*\bfJ\bfone
=(\lambda-\mu)kv+(k-\mu)v+\mu v^2,
\end{equation*}
at which point dividing by $v$ gives
\begin{equation}
\label{equation.srg parameter relation}
k(k-\lambda-1)
=(v-k-1)\mu.
\end{equation}
The second fact we shall need is that the \textit{complement} of an SRG---the graph obtained from it by disconnecting neighbors and connecting nonneighbors---is another SRG.
Indeed, the adjacency matrix of this complement is $\tilde{\bfA}=\bfJ-\bfI-\bfA$; substituting $\bfA=\bfJ-\bfI-\tilde{\bfA}$ into $\bfA\bfone=k\bfone$ gives that the complement is regular with $\tilde{k}=v-k-1$; substituting this same equation into~\eqref{equation.definition of srg} and then solving for $\tilde{\bfA}^2$ gives that the complement is strongly regular with parameters
\begin{equation}
\label{equation.SRG complement parameters}
(\tilde{v},\ \tilde{k},\ \tilde{\lambda},\ \tilde{\mu})
=(v,\ v-k-1,\ v-2k-2+\mu,\ v-2k+\lambda).
\end{equation}

As noted above, the graphs arising from real ETFs according to~\eqref{equation.ETF to SRG conversion} need not be regular, let alone strongly regular.
The traditional method for overcoming this is to sign a given real ETF $\set{\bfphi_i}_{i=1}^{n}$ in order to assume without loss of generality that $\ip{\bfphi_1}{\bfphi_i}>0$ for all $i=2,\dotsc,n$.
This ensures that our adjacency matrix $\bfA$ given in~\eqref{equation.ETF to SRG conversion} is of the form
\begin{equation}
\label{equation.n-1 adjacency}
\bfA=\left[\begin{array}{ll}0&\bfone^*\\\bfone&\bfA_0\end{array}\right]
\end{equation}
where $\bfA_0$ is an $(n-1)\times (n-1)$ adjacency matrix.
As detailed in~\cite{FickusW15}, adjacency matrices $\bfA$ of this form satisfy our real ETF condition~\eqref{equation.ETF condition on adjacency matrix} if and only if $\bfA_0$ is the adjacency matrix of an $\SRG(n-1,k,\lambda,\mu)$ where $k=2\mu$:
\begin{theorem}
\label{theorem.traditional ETF SRG equivalence}
Let $\set{\bfphi_i}_{i=1}^{n}$ be an ETF for $\bbR^m$ where $m<n$ and let $\alpha=\tfrac{n}{m}$ and $\beta=[\frac{n-m}{m(n-1)}]^{\frac12}$.
Assume without loss of generality that $\ip{\bfphi_1}{\bfphi_i}>0$ for all $i$.
Then letting $\bfA_0$ satisfy
\begin{equation*}
\left[\begin{array}{ll}0&\bfone^*\\\bfone&\bfA_0\end{array}\right]
=\bfA
=\tfrac1{2\beta}\bfPhi^*\bfPhi-\tfrac{\beta+1}{2\beta}\bfI+\tfrac12\bfJ
\end{equation*}
we have that $\bfA_0$ is the adjacency matrix of an $\SRG(v,k,\lambda,\mu)$ with
\begin{equation}
\label{equation.traditional ETF SRG equivalence 1}
v=n-1,
\qquad
k=\tfrac n2-1+\tfrac{\alpha-2}{2\beta},
\qquad
\mu=\tfrac k2.
\end{equation}

Conversely, let $\bfA_0$ be the adjacency matrix of an $\SRG(v,k,\lambda,\mu)$ with $\mu=\frac k2$.
Then
\begin{equation}
\label{equation.traditional ETF SRG equivalence 2}
m=\tfrac{v+1}{2}\biggset{1+\tfrac{v-2k-1}{[(v-2k-1)^2+4v]^{\frac12}}}
\end{equation}
is the unique choice of $m$ for which there exists $\beta>0$ such that
\begin{equation*}
\bfG
=\left[\begin{array}{ll}1&\beta\bfone^*\\\beta\bfone&2\beta\bfA_0+(\beta+1)\bfI-\beta\bfJ\end{array}\right]
\end{equation*}
is the Gram matrix of an ETF for $\bbR^m$ of $n=v+1$ vectors.
Here, $\beta$ is necessarily \smash{$[\frac{n-m}{m(n-1)}]^{\frac12}$}.

These two transformations are inverses.
In fact, for any real scalars $m$, $n$, $v$, $k$ where $v>0$ and $n>\max\set{m,1}$,
having $v$ and $k$ be given by~\eqref{equation.traditional ETF SRG equivalence 1} is equivalent to having $m$ be given by~\eqref{equation.traditional ETF SRG equivalence 2} while $n=v+1$.
\end{theorem}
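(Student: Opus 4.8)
The plan is to prove both directions and the inverse relation by reducing the single equation $\bfG^2=\alpha\bfG$ to its natural $2\times2$ block form and then invoking the criterion following \eqref{equation.definition of srg}: once $\bfA_0^2$ is exhibited as a linear combination of $\bfA_0$, $\bfI$, and an all-ones matrix, $\bfA_0$ is automatically an SRG. Throughout, let $\bfJ_0$ be the $(n-1)\times(n-1)$ all-ones matrix. For the forward direction, signing so that $\ip{\bfphi_1}{\bfphi_i}>0$ forces $\ip{\bfphi_1}{\bfphi_i}=\beta$ for every $i\geq2$ by equiangularity, so \eqref{equation.ETF to SRG conversion} puts $\bfA$ into the form \eqref{equation.n-1 adjacency} and $\bfG=\bfPhi^*\bfPhi$ into the block form displayed in the converse part of the statement, with lower-right block $\bfG_0=2\beta\bfA_0+(\beta+1)\bfI-\beta\bfJ_0$ and $v=n-1$. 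Since $\bfG$ is an ETF Gram matrix, $\bfG^2=\alpha\bfG$, which I would read off block by block: the $(1,1)$ entry recovers $\beta^2=\frac{\alpha-1}{n-1}$, consistent with the stated $\beta$; the $(1,2)$ block gives $\bfone^*\bfG_0=(\alpha-1)\bfone^*$, hence $\bfG_0\bfone=(\alpha-1)\bfone$ by symmetry, and inserting the expression for $\bfG_0$ with $\bfJ_0\bfone=(n-1)\bfone$ yields $\bfA_0\bfone=k\bfone$ with $k=\frac n2-1+\frac{\alpha-2}{2\beta}$, so $\bfA_0$ is regular with the degree in \eqref{equation.traditional ETF SRG equivalence 1}.

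Regularity makes $\bfA_0$ and $\bfJ_0$ commute ($\bfA_0\bfJ_0=k\bfJ_0$), so every subsequent product lives in the commutative algebra generated by $\bfA_0,\bfI,\bfJ_0$. The $(2,2)$ block of $\bfG^2=\alpha\bfG$ reads $\bfG_0^2=\alpha\bfG_0-\beta^2\bfJ_0$; expanding $\bfG_0$ and solving for $\bfA_0^2$ produces an identity $\bfA_0^2=x\bfA_0+y\bfI+z\bfJ_0$. By the criterion following \eqref{equation.definition of srg}, $\bfA_0$ is the adjacency matrix of an SRG with $\mu=z$, $k=y+z$, $\lambda=x+z$ (the degenerate graphs $\bfzero$ and $\bfJ_0-\bfI$ being SRGs as well), and a short substitution of the computed $k$ gives $z=\frac k2$, that is, $\mu=\frac k2$.

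For the converse, given an $\SRG(v,k,\lambda,\mu)$ with $\mu=\frac k2$, I would form $\bfG$ as in the statement with $n=v+1$. Conditions (ii) and (iii) of Lemma~\ref{lemma.Gram matrix ETF characterization} hold for every $\beta>0$, since the diagonal of $\bfG$ is $1$ and each off-diagonal entry is $\pm\beta$. For condition (i), I would expand $\bfG^2=\alpha\bfG$ blockwise and use \eqref{equation.definition of srg}, regularity, $\mu=\frac k2$, and the parameter relation \eqref{equation.srg parameter relation}; the $\bfA_0$-, $\bfI$- and $\bfJ_0$-coefficients of the $(2,2)$ block, together with the $(1,1)$ and $(1,2)$ blocks, all collapse to the two scalar equations $\alpha-1=\beta^2v$ and $\alpha-2=\beta w$, where $w:=2k+1-v$. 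Verifying that no further independent constraint survives is the crux here. Eliminating $\beta$ gives $v\alpha^2-(4v+w^2)\alpha+(4v+w^2)=0$, whose roots are $\alpha_\pm=\frac{(4v+w^2)\pm|w|\sqrt{4v+w^2}}{2v}$.

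Finally, $\beta=\frac{\alpha-2}{w}$ is positive for exactly one root: inspecting the sign of $\alpha-2$ shows the $+$ root works when $w>0$, the $-$ root when $w<0$, and when $w=0$ the double root $\alpha=2$, $\beta=v^{-1/2}$ is forced. In every case the admissible root equals $\frac{\sqrt{w^2+4v}\,(\sqrt{w^2+4v}+w)}{2v}$; since $m=\Tr(\bfG)/\alpha=\frac{v+1}{\alpha}$, substituting and rationalizing reproduces \eqref{equation.traditional ETF SRG equivalence 2} exactly (using $v-2k-1=-w$), which yields existence and uniqueness, and $\beta^2=\frac{\alpha-1}{v}$ rearranges to $\beta=[\frac{n-m}{m(n-1)}]^{\frac12}$; Lemma~\ref{lemma.Gram matrix ETF characterization} then certifies $\bfG$ as an ETF Gram matrix. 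The inverse claim is this same computation read at the scalar level: for real $m,n,v,k$ with $v>0$ and $n>\max\set{m,1}$ (forcing $\alpha=\frac nm>1$ and $\beta>0$), \eqref{equation.traditional ETF SRG equivalence 1} is equivalent to the pair $\alpha-1=\beta^2v$, $\alpha-2=\beta w$, which we have just shown is equivalent to $n=v+1$ together with \eqref{equation.traditional ETF SRG equivalence 2}. I expect the main obstacle to be the $(2,2)$-block reduction in the converse together with the sign analysis that isolates the unique $\beta>0$ root and matches it to \eqref{equation.traditional ETF SRG equivalence 2} uniformly in the sign of $w$.
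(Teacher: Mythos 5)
Your proof is correct; the paper itself does not prove Theorem~\ref{theorem.traditional ETF SRG equivalence}, deferring to~\cite{FickusW15}, but your blockwise reduction of $\bfG^2=\alpha\bfG$ is precisely the technique the paper employs for the analogous Theorems~\ref{theorem.real centroid-symmetric ETF implies SRG} and~\ref{theorem.SRG implies real centroid-symmetric ETF}: use the $(1,2)$ block to get regularity $\bfA_0\bfone=k\bfone$ first, expand the $(2,2)$ block in the commutative algebra generated by $\bfA_0$, $\bfI$, $\bfJ_0$, isolate the unique positive root of a quadratic in $\beta$, and recover $m=\Tr(\bfG)/\alpha$. In particular, the crux you flag checks out: with $\mu=\tfrac k2$ and~\eqref{equation.srg parameter relation} (equivalently $\lambda=\tfrac{3k-v-1}{2}$), the $\bfA_0$- and $\bfI$-coefficient equations of the $(2,2)$ block are consequences of the two scalar equations $\alpha-1=\beta^2v$ and $\alpha-2=\beta(2k+1-v)$, and rationalizing $m=(v+1)/\alpha$ with your uniform admissible root $\alpha=\tfrac{1}{2v}\sqrt{w^2+4v}\,\bigl(\sqrt{w^2+4v}+w\bigr)$, $w=2k+1-v$, does reproduce~\eqref{equation.traditional ETF SRG equivalence 2} in all three sign cases of $w$.
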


We offer three remarks on the above result.
First, given an $m\times n$ real ETF, the integrality of the parameter $k$ in~\eqref{equation.traditional ETF SRG equivalence 1} is by no means obvious, and is in fact closely related to strong, previously known integrality conditions on the existence of real ETFs~\cite{HolmesP04,SustikTDH07,Waldron09}.
Second, we do not specify $\lambda$ in~\eqref{equation.traditional ETF SRG equivalence 1} since it is superfluous:
for any $\SRG(v,k,\lambda,\mu)$ where $k=2\mu$, solving for $\lambda$ in~\eqref{equation.srg parameter relation} quickly gives $\lambda=\tfrac{3k-v-1}{2}$.
Third, our formula for $k$ in~\eqref{equation.traditional ETF SRG equivalence 1} is slightly different from that given in~\cite{Waldron09}.
This is because~\eqref{equation.ETF to SRG conversion} converts Gram matrix values of $\beta$ and $-\beta$ to $1$ and $0$, respectively, whereas in~\cite{Waldron09} they are converted to $0$ and $1$, respectively.
We make this change because if we view the frame vectors as points on a sphere, it is geometrically more natural to identify points as neighbors when the angle between them is acute, as opposed to obtuse.
Regardless, this preference is of little consequence: applying one approach to a real ETF equates to applying the other to its Naimark complement, and the resulting SRGs are simply graph complements of each other.

\section{Equiangular tight frames with centroidal symmetry}

In the previous section, we discussed how real ETFs are traditionally signed so as to assume without loss of generality that their adjacency matrix $\bfA$, as defined in~\eqref{equation.ETF to SRG conversion}, is of the form~\eqref{equation.n-1 adjacency}.
When substituted into~\eqref{equation.ETF condition on adjacency matrix}, this ensures the corresponding induced subgraph on $n-1$ vertices is regular, and moreover, strongly regular.
This approach is clever, and leads to an equivalence between real ETFs and SRGs with $k=2\mu$.
Interestingly, it turns out that for some constructions of real ETFs, the entire $n$-vertex graph~\eqref{equation.ETF to SRG conversion} just happens to be regular.
This means no signing is necessary, and we can simply expand the square in~\eqref{equation.ETF condition on adjacency matrix} to write $\bfA^2$ as a linear combination of $\bfA$, $\bfI$ and $\bfJ$, thereby proving $\bfA$ gives an $n$-vertex SRG.
In this section, we explore what it means for $\bfA$ to be regular.
Much of this theory is generalizable to the complex setting, and we do so whenever possible.
In the next section, we then build on these results to establish a new equivalence between these ``regular" real ETFs and SRGs on $v=n$ vertices whose parameters satisfy $v=4k-2\mu-2\lambda$.

The following result gives a fundamental characterization of when an adjacency matrix arising from a real ETF happens to be regular:
\begin{theorem}
\label{theorem.characterizing centered and axial}
Let $\set{\bfphi_i}_{i=1}^{n}$ be an ETF for $\bbF^m$ and let $\alpha=\frac nm$ and $\beta=[\frac{n-m}{m(n-1)}]^{\frac12}$.
\begin{enumerate}
\alphi
\item
If $\bbF=\bbR$, then the $n$-vertex graph whose adjacency matrix is  $\bfA=\tfrac1{2\beta}\bfPhi^*\bfPhi-\tfrac{\beta+1}{2\beta}\bfI+\tfrac12\bfJ$ is regular if and only if $\bfone$ is an eigenvector for the Gram matrix $\bfPhi^*\bfPhi$.
\item
In general, $\bfone$ is an eigenvector for $\bfPhi^*\bfPhi$ if and only if either:
\begin{enumerate}
\romanii
\item
$\bfPhi^*\bfPhi\bfone=\bfzero$, which occurs precisely when $\bfone$ lies in the null space of $\bfPhi$, or
\item
$\bfPhi^*\bfPhi\bfone=\alpha\bfone$, which occurs precisely when $\bfone$ lies in the column space of $\bfPhi^*$.
\end{enumerate}
\end{enumerate}
\end{theorem}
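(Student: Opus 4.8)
The plan is to treat the two parts separately: part (a) reduces the regularity of the graph to an eigenvector condition by a direct computation of $\bfA\bfone$, while part (b) rests entirely on the idempotency relation $\bfG^2=\alpha\bfG$ supplied by Lemma~\ref{lemma.Gram matrix ETF characterization} together with the tight-frame relation $\bfPhi\bfPhi^*=\alpha\bfI$.

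For part (a), recall that a graph is regular precisely when its adjacency matrix satisfies $\bfA\bfone=k\bfone$ for some scalar $k$, that is, when $\bfone$ is an eigenvector of $\bfA$. First I would compute $\bfA\bfone$ directly. Since $\bfI\bfone=\bfone$ and $\bfJ\bfone=\bfone\bfone^*\bfone=n\bfone$, the definition of $\bfA$ gives
\[
\bfA\bfone=\tfrac1{2\beta}\bfPhi^*\bfPhi\bfone+\Bigl(\tfrac n2-\tfrac{\beta+1}{2\beta}\Bigr)\bfone.
\]
Because the second term is already a fixed multiple of $\bfone$, the vector $\bfA\bfone$ is a scalar multiple of $\bfone$ if and only if $\bfPhi^*\bfPhi\bfone$ is. As $\bfPhi^*\bfPhi$ is self-adjoint and $\bfone\neq\bfzero$, this last condition is exactly the statement that $\bfone$ is an eigenvector of the Gram matrix, completing part (a).

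For part (b), the key observation is that $\bfG=\bfPhi^*\bfPhi$ satisfies $\bfG^2=\alpha\bfG$ by Lemma~\ref{lemma.Gram matrix ETF characterization}, so every eigenvalue $\lambda$ of $\bfG$ obeys $\lambda(\lambda-\alpha)=0$ and hence lies in $\set{0,\alpha}$. Thus if $\bfone$ is an eigenvector, its eigenvalue must be either $0$ or $\alpha$, which yields cases (i) and (ii); the converse is immediate since $\bfone\neq\bfzero$. It then remains to verify the two ``precisely when'' characterizations. For (i), I would note that $\bfPhi^*\bfPhi\bfone=\bfzero$ forces $0=\bfone^*\bfPhi^*\bfPhi\bfone=\norm{\bfPhi\bfone}^2$, so $\bfone$ lies in the null space of $\bfPhi$, and the reverse implication is trivial. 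For (ii), the cleanest route uses $\bfPhi\bfPhi^*=\alpha\bfI$: if $\bfone=\bfPhi^*\bfy$ for some $\bfy$, then $\bfPhi^*\bfPhi\bfone=\bfPhi^*(\bfPhi\bfPhi^*)\bfy=\alpha\bfPhi^*\bfy=\alpha\bfone$; conversely, $\bfPhi^*\bfPhi\bfone=\alpha\bfone$ exhibits $\bfone=\bfPhi^*(\tfrac1\alpha\bfPhi\bfone)$ as an element of the column space of $\bfPhi^*$.

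I do not expect a serious obstacle here: the whole result follows from the two defining algebraic relations of an ETF, namely $\bfG^2=\alpha\bfG$ and $\bfPhi\bfPhi^*=\alpha\bfI$, combined with the elementary actions $\bfI\bfone=\bfone$ and $\bfJ\bfone=n\bfone$. The single point deserving a moment's care is recognizing that it is the confinement of the eigenvalues to $\set{0,\alpha}$ that forces the dichotomy in part (b); once that is in hand, everything else is bookkeeping.
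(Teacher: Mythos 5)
Your proposal is correct and follows essentially the same route as the paper: part (a) reduces to the fact that $\bfone$ is an eigenvector of both $\bfI$ and $\bfJ$, and part (b) uses $(\bfPhi^*\bfPhi)^2=\alpha\bfPhi^*\bfPhi$ to confine the eigenvalue to $\set{0,\alpha}$ and then identifies the two cases with $\rmN(\bfPhi)$ and $\rmC(\bfPhi^*)$ exactly as the paper does. The only cosmetic difference is that you spell out $\rmN(\bfPhi^*\bfPhi)=\rmN(\bfPhi)$ via $\norm{\bfPhi\bfone}^2=\bfone^*\bfPhi^*\bfPhi\bfone$ and exhibit the preimage $\tfrac1\alpha\bfPhi\bfone$ explicitly, where the paper cites the range/null-space identities directly.
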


\begin{proof}
For (a), note that since $\bfone$ is an eigenvector for both $\bfI$ and $\bfJ=\bfone\bfone^*$, then $\bfone$ is an eigenvector for $\bfA$ if and only if it is an eigenvector for $\bfPhi^*\bfPhi$.
For (b), note that since $(\bfPhi^*\bfPhi)^2=\alpha\bfPhi^*\bfPhi$, the only possible eigenvalues for $\bfPhi^*\bfPhi$ are $0$ and $\alpha$.
Thus, $\bfone$ is an eigenvector for $\bfPhi^*\bfPhi$ if and only if either $\bfPhi^*\bfPhi\bfone=\bfzero$ or $\bfPhi^*\bfPhi\bfone=\alpha\bfone$.
The first case is equivalent to having $\bfone\in\rmN(\bfPhi^*\bfPhi)=\rmN(\bfPhi)$, giving (i).
For (ii), note that if $\bfPhi^*\bfPhi\bfone=\alpha\bfone$, then $\bfone\in\rmC(\bfPhi^*\bfPhi)=\rmC(\bfPhi^*)$.
Conversely, if $\bfone\in\rmC(\bfPhi^*)$ then $\bfone=\bfPhi^*\bfy$ for some $\bfy\in\bbF^m$ and so $\bfPhi^*\bfPhi\bfone=\bfPhi^*\bfPhi\bfPhi^*\bfy=\bfPhi^*(\alpha\bfI)\bfy=\alpha\bfone$.
\end{proof}

To get a better understanding of these conditions, note $\bfPhi^*\bfPhi\bfone=\gamma\bfone$ if and only if
\begin{equation*}
\gamma
=(\bfPhi^*\bfPhi\bfone)(i)
=\sum_{j=1}^{n}(\bfPhi^*\bfPhi)(i,j)
=\sum_{j=1}^{n}\ip{\bfphi_i}{\bfphi_j}
=\biggip{\bfphi_i}{\sum_{j=1}^{n}\bfphi_j}
\end{equation*}
for all $i=1,\dotsc,n$.
Regarding $\set{\bfphi_i}_{i=1}^{n}$ as point masses on the unit sphere, this means that $\bfone$ is an eigenvector for $\bfPhi^*\bfPhi$ precisely when the angle between the $\bfphi_i$'s and their centroid $\frac{1}{n}\sum_{j=1}^n\bfphi_j$ is constant.
One way for this to happen---case (i) in the above theorem---is for their centroid to be zero, meaning the $\bfphi_i$'s are ``equally distributed" about the origin.
The only other way---case (ii) above---is for their centroid to form an axis, and for the $\bfphi_i$'s to be ``equally distributed" in an affine hyperplane that is orthogonal to this axis.
This perspective leads us to the following terminology:
\begin{definition}
\label{definition.centered and axial}
We say an ETF $\set{\bfphi_i}_{i=1}^{n}$ for $\bbF^m$ is \textit{centered} if $\bfone\in\rmN(\bfPhi)$, namely when $\bfPhi^*\bfPhi\bfone=\bfzero$.
We say it is \textit{axial} if $\bfone\in\rmC(\bfPhi^*)$, namely when $\bfPhi^*\bfPhi\bfone=\alpha\bfone$ where $\alpha=\tfrac nm$.
In either case, we say such an ETF has \textit{centroidal symmetry}.
\end{definition}

Note an ETF cannot simultaneously be both centered and axial since if $\bfone$ is an eigenvector for $\bfPhi^*\bfPhi$, its eigenvalue cannot be both $0$ and $\alpha$;
alternatively, $\bfone$ cannot simultaneously lie in $\rmN(\bfPhi)$ as well as its orthogonal complement $\rmN(\bfPhi)^\perp=\rmC(\bfPhi^*)$.
Moreover, many ETFs are not centered, and many are not axial;
to see this, it helps to first discuss some connections between these types of symmetry and the simplest methods for obtaining a new ETF from an existing one: rotating, permuting, signing and taking Naimark complements.

\begin{theorem}
\label{theorem.ETF operations}
Let $\set{\bfphi_i}_{i=1}^{n}$ be an ETF for $\bbF^m$.
\begin{enumerate}
\alphi
\item
For any unitary matrix $\bfU\in\bbF^{m\times m}$ and permutation $\sigma:[n]\rightarrow[n]$, the ETF $\set{\bfU\bfphi_{\sigma(i)}}_{i=1}^{n}$ is:
\begin{enumerate}
\romanii
\item
centered if and only if $\set{\bfphi_i}_{i=1}^{n}$ is centered;
\item
axial if and only if $\set{\bfphi_i}_{i=1}^{n}$ is axial.
\end{enumerate}\smallskip
\item
For any vector $\bfz\in\bbF^n$ with unimodular entries, the ETF $\set{z_i\bfphi_i}_{i=1}^{n}$ is
\begin{enumerate}
\romanii
\item
centered if and only if $\bfz\in\rmN(\bfPhi)$;
\item
axial if and only if $\bfz\in\rmC(\bfPhi^*)$.
\end{enumerate}\smallskip
\item
\begin{enumerate}
\romanii
\item
If $\set{\bfphi_i}_{i=1}^{n}$ is centered then all of its Naimark complements are axial.\\
Conversely, if $\set{\bfphi_i}_{i=1}^{n}$ has an axial Naimark complement then $\set{\bfphi_i}_{i=1}^{n}$ is centered.
\item
If $\set{\bfphi_i}_{i=1}^{n}$ is axial then all of its Naimark complements are centered.\\
Conversely, if $\set{\bfphi_i}_{i=1}^{n}$ has a centered Naimark complement then $\set{\bfphi_i}_{i=1}^{n}$ is axial.
\end{enumerate}
\end{enumerate}
\end{theorem}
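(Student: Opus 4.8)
The plan is to treat the three parts separately, each time reducing the centered/axial conditions---membership of $\bfone$ in $\rmN(\bfPhi)$ or in $\rmC(\bfPhi^*)$ as in Definition~\ref{definition.centered and axial}---to a transparent statement about how the operation transforms these subspaces, or transforms the Gram matrix applied to $\bfone$. For (a), I would write the synthesis operator of $\set{\bfU\bfphi_{\sigma(i)}}_{i=1}^{n}$ as $\bfU\bfPhi\bfPi$, where $\bfPi$ is the permutation matrix with $\bfPi e_i=e_{\sigma(i)}$; its Gram matrix is then $\bfPi^*\bfPhi^*\bfPhi\bfPi$. Since every permutation matrix fixes the all-ones vector, $\bfPi\bfone=\bfone=\bfPi^*\bfone$, so this Gram matrix sends $\bfone$ to $\bfPi^*(\bfPhi^*\bfPhi\bfone)$. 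As $\bfPi^*$ is invertible and fixes $\bfone$, the transformed Gram matrix annihilates $\bfone$ exactly when $\bfPhi^*\bfPhi\bfone=\bfzero$ and sends $\bfone$ to $\alpha\bfone$ exactly when $\bfPhi^*\bfPhi\bfone=\alpha\bfone$; since $\bfU$ preserves $\alpha=\tfrac nm$, this yields both equivalences in (a).

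For (b), the synthesis operator of $\set{z_i\bfphi_i}_{i=1}^{n}$ is $\bfPhi\bfD$ with $\bfD=\operatorname{diag}(z_1,\dotsc,z_n)$ unitary, so its Gram matrix is $\bfD^*\bfPhi^*\bfPhi\bfD$. Applying this to $\bfone$ and using $\bfD\bfone=\bfz$ yields $\bfD^*\bfPhi^*\bfPhi\bfz$; since $\bfD^*$ is invertible, this vanishes iff $\bfPhi^*\bfPhi\bfz=\bfzero$, i.e.\ $\bfz\in\rmN(\bfPhi)$, which is (i). For (ii), setting $\bfD^*\bfPhi^*\bfPhi\bfz=\alpha\bfone$ and left-multiplying by $\bfD$ (using $\bfD\bfD^*=\bfI$ and $\bfD\bfone=\bfz$) gives $\bfPhi^*\bfPhi\bfz=\alpha\bfz$, which by Theorem~\ref{theorem.characterizing centered and axial}(b)(ii) is equivalent to $\bfz\in\rmC(\bfPhi^*)$.

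The heart of the matter is (c), which I would prove via the fundamental subspaces of a Naimark complement. By definition the row space of $\tilde{\bfPhi}$ is the orthogonal complement of the row space of $\bfPhi$; since the latter equals $\rmC(\bfPhi^*)=\rmN(\bfPhi)^\perp$, this forces $\rmC(\tilde{\bfPhi}^*)=\rmN(\bfPhi)$ and, taking orthogonal complements, $\rmN(\tilde{\bfPhi})=\rmC(\bfPhi^*)$. Hence $\tilde{\bfPhi}$ is axial (that is, $\bfone\in\rmC(\tilde{\bfPhi}^*)$) exactly when $\bfone\in\rmN(\bfPhi)$, i.e.\ when $\set{\bfphi_i}_{i=1}^{n}$ is centered, and $\tilde{\bfPhi}$ is centered exactly when $\set{\bfphi_i}_{i=1}^{n}$ is axial. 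Because these subspace identities hold for every admissible $\tilde{\bfPhi}$ regardless of the orthogonal basis chosen for the complementary row space, each biconditional delivers at once both the ``all Naimark complements'' direction and its converse, proving (i) and (ii).

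The step that will require the most care is (c), though the difficulty is one of bookkeeping rather than of substance: the complement $\tilde{\bfPhi}$ produced by Lemma~\ref{lemma.Naimark complements} is not unit-norm, so its tight-frame constant is $\tilde{\alpha}=\tfrac{n-m}{m}$ rather than $1$, and one must avoid conflating the eigenvalue appearing in the axial condition for $\tilde{\bfPhi}$ with the $\alpha$ for $\bfPhi$. The subspace formulation above circumvents this completely, since the spaces $\rmN(\bfPhi)$ and $\rmC(\bfPhi^*)$, together with their tilded counterparts, are unchanged when all columns are rescaled by a common factor. As a consistency check one may instead feed $\bfone$ into the identity $\bfI=\tfrac1{\alpha}\bfPhi^*\bfPhi+\tfrac1{\tilde{\alpha}}\tilde{\bfPhi}^*\tilde{\bfPhi}$ of Lemma~\ref{lemma.Naimark complements}: if $\set{\bfphi_i}_{i=1}^{n}$ is centered then $\bfPhi^*\bfPhi\bfone=\bfzero$ forces $\tilde{\bfPhi}^*\tilde{\bfPhi}\bfone=\tilde{\alpha}\bfone$ (axial), while if it is axial then $\bfPhi^*\bfPhi\bfone=\alpha\bfone$ forces $\tilde{\bfPhi}^*\tilde{\bfPhi}\bfone=\bfzero$ (centered), recovering the same conclusions.
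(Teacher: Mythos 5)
Your proof is correct and takes essentially the same approach as the paper: parts (a) and (b) are the same Gram-matrix/synthesis-operator computations, and your subspace argument for (c) (that $\rmC(\tilde{\bfPhi}^*)=\rmN(\bfPhi)$ and $\rmN(\tilde{\bfPhi})=\rmC(\bfPhi^*)$) is an equivalent repackaging of the Gram identity from Lemma~\ref{lemma.Naimark complements} that the paper uses and that you also supply as a cross-check. The only cosmetic caveat is that in (b)(ii) you invoke Theorem~\ref{theorem.characterizing centered and axial}(b)(ii) with $\bfz$ in place of $\bfone$---that statement is phrased for $\bfone$ specifically, but its proof works verbatim for any vector (as the paper's direct verification that $\bfPhi^*\bfPhi\bfz=\alpha\bfz$ iff $\bfz\in\rmC(\bfPhi^*)$ shows), so nothing is lost.
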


\begin{proof}
Let $\bfPhi$ denote the synthesis operator of $\set{\bfphi_i}_{i=1}^{n}$.
\begin{enumerate}
\alphi
\item
The Gram matrix of $\set{\bfU\bfphi_{\sigma(i)}}_{i=1}^{n}$ is $(\bfU\bfPhi\bfPi)^*(\bfU\bfPhi\bfPi)=\bfPi^*\bfPhi^*\bfPhi\bfPi$ where $\bfPi$ is the corresponding permutation matrix.
Since \smash{$\bfPi^*=\bfPi^{-1}$} and $\bfPi\bfone=\bfone$, we see that $\bfone$ is an eigenvector for $\bfPi^*\bfPhi^*\bfPhi\bfPi$ with a given eigenvalue if and only if it is an eigenvector for $\bfPhi^*\bfPhi$ with this same eigenvalue.
\item
The synthesis operator of $\set{z_i\bfphi_i}_{i=1}^{n}$ is $\bfPhi\bfD$ where $\bfD$ is a diagonal matrix whose diagonal is $\bfz$.
As such, $\set{z_i\bfphi_i}_{i=1}^{n}$ is centered if and only if $\bfone\in\rmN(\bfPhi\bfD)$, namely when $\bfzero=\bfPhi\bfD\bfone=\bfPhi\bfz$, i.e.\ $\bfz\in\rmN(\bfPhi)$.
Similarly, $\set{z_i\bfphi_i}_{i=1}^{n}$ is axial if and only if $\bfone\in\rmC(\bfD^*\bfPhi^*)$, namely when there exists $\bfy\in\bbF^m$ such that $\bfone=\bfD^*\bfPhi^*\bfy$, namely when there exists $\bfy$ such that $\bfz=\bfD\bfone=\bfPhi^*\bfy$, which is equivalent to having $\bfz\in\rmC(\bfPhi^*)$.
\item
As noted in Lemma~\ref{lemma.Naimark complements}, if $\set{\tilde{\bfphi}_i}_{i=1}^{n}$ is any Naimark complement of $\set{\bfphi_i}_{i=1}^{n}$ then their Gram matrices satisfy
$\bfI=\tfrac1{\alpha}\bfPhi^*\bfPhi+\tfrac1{\tilde{\alpha}}\tilde{\bfPhi}^*\tilde{\bfPhi}$
where $\alpha=\frac nm$ and $\tilde{\alpha}=\frac{n}{n-m}$.
In particular, $\bfPhi^*\bfPhi\bfone=\bfzero$ if and only if $\tilde{\bfPhi}^*\tilde{\bfPhi}\bfone=\tilde{\alpha}\bfone$,
while $\bfPhi^*\bfPhi\bfone=\alpha\bfone$ if and only if $\tilde{\bfPhi}^*\tilde{\bfPhi}\bfone=\bfzero$.
\qedhere
\end{enumerate}
\end{proof}

From result (b) in particular, we see that if $\set{\bfphi_i}_{i=1}^{n}$ is any ETF, then the ETF $\set{z_i\bfphi_i}_{i=1}^{n}$ is not centered for all unimodular vectors $\bfz$ which lie outside of $\rmN(\bfPhi)$, and is not axial for all such $\bfz$ that lie outside of $\rmC(\bfPhi^*)$.
As such, we shouldn't expect a ``random" ETF to be centered or axial.
A more difficult problem is to determine which ETFs are sign-equivalent to one which is centered or axial.
That is, which ETF synthesis operators have the property that either their null spaces or row spaces contain a unimodular vector?

In the remainder of this section, we consider two well-known explicit constructions of ETFs, namely the harmonic ETFs of~\cite{StrohmerH03,XiaZG05,DingF07} and the Steiner ETFs of~\cite{FickusMT12,JasperMF14}, and discuss whether they can be centered or axial.

\subsection{Harmonic equiangular tight frames with centroidal symmetry}

Harmonic ETFs are formed by restricting the characters of a finite abelian group to difference sets~\cite{StrohmerH03,XiaZG05,DingF07}.
To be precise, let $\set{g_i}_{i=1}^{n}$ and $\set{\gamma_j}_{j=1}^{n}$ be enumerations of a finite abelian group $\calG$ and its Pontryagin dual $\Gamma\cong\calG$, respectively.
Let $\bfH$ be the corresponding character table, namely the $n\times n$ complex Hadamard matrix with entries $\bfH(i,j)=\gamma_j(g_i)$.
A \textit{harmonic frame} is obtained by extracting rows from $\bfH$.
Specifically, for any $m$-element subset $\calD=\set{g_{i_k}}_{k=1}^{m}$ of $\calG$, consider the $m\times n$ synthesis operator $\bfPhi$ given by $\bfPhi(k,j)=\frac1{\sqrt{m}}\bfH(i_k,j)$.
It is known that the columns of $\bfPhi$ are an ETF precisely when $\calD$ is a \textit{difference set} of $\calG$, namely when the cardinality of $\set{(d,d')\in\calD\times\calD: g=d-d'}$ is constant over all $g\neq 0$~\cite{XiaZG05,DingF07}.

For example, $\calD=\set{1,2,4}$ is a difference set in the cyclic group $\calG=\bbZ_7$ since, as seen in the following difference table, every nonzero element of this group can be written as a difference of elements from this subset in exactly one way:
\begin{equation*}
\begin{array}{c|ccc}
-&1&2&4\\
\hline
1&0&1&3\\
2&6&0&2\\
4&4&5&0
\end{array}\ .
\end{equation*}
This means we can form a $3\times 7$ ETF by extracting the three corresponding rows from a $7\times 7$ character table of $\bbZ_7$, which is a discrete Fourier transform;
letting $\omega=\exp(2\pi\rmi/7)$, we have
\begin{equation}
\label{equation.3x7 example}
\bfPhi
=\frac1{\sqrt{3}}\left[\begin{array}{lllllll}
1&\omega^{1}&\omega^{2}&\omega^{3}&\omega^{4}&\omega^{5}&\omega^{6}\\
1&\omega^{2}&\omega^{4}&\omega^{6}&\omega^{8}&\omega^{10}&\omega^{12}\\
1&\omega^{4}&\omega^{8}&\omega^{12}&\omega^{16}&\omega^{20}&\omega^{24}
\end{array}\right].
\end{equation}

Difference sets are a well-studied topic in combinatorial design; see~\cite{JungnickelPS07} for an overview.
Well-known examples include \textit{Singer} difference sets in cyclic groups which have \smash{$m=\frac{q^j-1}{q-1}$} and \smash{$n=\frac{q^{j+1}-1}{q-1}$} for any prime power $q$ and integer $j\geq2$, as well as \textit{McFarland} difference sets in certain noncyclic abelian groups which have \smash{$m=q^{j-1}\frac{q^j-1}{q-1}$} and \smash{$n=q^j(\frac{q^j-1}{q-1}+1)$} under the same restrictions on $q$ and $j$.
As we now demonstrate, all harmonic ETFs are either centered or axial:

\begin{theorem}
\label{theorem.difference set implies centroidal}
Letting $\calD$ be a difference set in a finite abelian group, the corresponding harmonic ETF is axial if $0\in\calD$ and is otherwise centered.
In particular, if there exists an $m$-element difference set in an abelian group of order $n>m$, then there exists $m\times n$ centered ETFs as well as $m\times n$ axial ETFs.
\end{theorem}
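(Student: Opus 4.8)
The plan is to compute the vector $\bfPhi^*\bfPhi\bfone$ directly and show that it is a scalar multiple of $\bfone$, with the scalar equal to $0$ in the centered case and $\alpha=\frac nm$ in the axial case; by Definition~\ref{definition.centered and axial} this is exactly what must be established, and it matches the two alternatives isolated in Theorem~\ref{theorem.characterizing centered and axial}(b). As noted in the remark following that theorem, the entry of $\bfPhi^*\bfPhi\bfone$ in the slot indexed by a character $\gamma$ equals $\sum_{\gamma'}\ip{\bfphi_\gamma}{\bfphi_{\gamma'}}$, so the task reduces to summing an entire row of the Gram matrix.

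First I would rewrite each inner product as a character sum over $\calD$. Indexing the columns of $\bfPhi$ by the characters $\gamma$ and the $m$ rows by the elements $d\in\calD$, the column $\bfphi_\gamma$ has entries $\frac1{\sqrt m}\gamma(d)$, so that
\[
\ip{\bfphi_\gamma}{\bfphi_{\gamma'}}=\tfrac1m\sum_{d\in\calD}(\gamma\overline{\gamma'})(d).
\]
Summing over all $\gamma'$ and interchanging the order of summation, I would use that for fixed $\gamma$ the map $\gamma'\mapsto\gamma\overline{\gamma'}$ is a bijection of the dual group, which reduces the inner sum to $\sum_{\chi}\chi(d)$ for each $d$. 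The crucial step is then the standard character orthogonality relation, namely that $\sum_{\chi}\chi(d)$ equals the group order $n$ when $d=0$ and vanishes otherwise. This collapses the double sum to $\frac nm$ times the number of zeros in $\calD$, a quantity independent of $\gamma$. Hence $\bfPhi^*\bfPhi\bfone$ is a constant vector: it is $\bfzero$ when $0\notin\calD$, giving a centered ETF, and it is $\frac nm\bfone=\alpha\bfone$ when $0\in\calD$, giving an axial ETF.

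Finally, for the ``in particular'' statement I would invoke the elementary fact that every translate $\calD+g$ of a difference set is again an $m$-element difference set with the same parameters, since $(d+g)-(d'+g)=d-d'$ leaves the multiset of differences unchanged. Choosing $g=-d_0$ for some $d_0\in\calD$ yields a difference set containing $0$, whose harmonic ETF is therefore axial; choosing $g\notin-\calD$, which is possible because $\abs{-\calD}=m<n$, yields a difference set avoiding $0$, whose harmonic ETF is therefore centered. Both translates are $m$-element subsets of the same order-$n$ group, so both produce $m\times n$ ETFs, as claimed.

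I do not anticipate a serious obstacle: the argument is essentially a single character-sum computation. The only points requiring care are the bookkeeping in the reindexing $\gamma'\mapsto\gamma\overline{\gamma'}$ and the observation that it is precisely the value $d=0$---rather than any distinguished character---that selects between the two cases, so that membership of $0$ in $\calD$ alone governs whether the ETF is axial or centered.
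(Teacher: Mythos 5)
Your proof is correct, and it reaches the conclusion by a somewhat different route than the paper. The paper works at the level of the synthesis operator: when $0\in\calD$ it simply observes that $\bfPhi$ contains a constant row, so $\bfone\in\rmC(\bfPhi^*)$ immediately (no orthogonality needed), and when $0\notin\calD$ it notes that every row of $\bfPhi$ is a nonconstant row of the character table $\bfH$ and hence, by the Hadamard property, orthogonal to $\bfone$, giving $\bfPhi\bfone=\bfzero$. You instead compute the Gram-matrix row sums $\sum_{\gamma'}\ip{\bfphi_\gamma}{\bfphi_{\gamma'}}$ directly, reindexing by $\gamma'\mapsto\gamma\overline{\gamma'}$ and invoking the second orthogonality relation $\sum_\chi\chi(d)=n\,[d=0]$. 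The underlying fact is the same orthogonality of characters, but your version unifies both cases into one computation and exhibits the eigenvalue ($0$ or $\alpha=\tfrac nm$) explicitly, whereas the paper's version is shorter and makes the structural reason more transparent (presence or absence of the constant row). The handling of the ``in particular'' clause via translates of $\calD$ is identical to the paper's. No gaps.
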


\begin{proof}
Let $\calD$ be a difference set in some finite abelian group $\calG$.
If $0\in\calD$, then the corresponding row of $\bfPhi$ is constant: if $g_{i_k}=0$ for some $k$ then for this $k$ and all $j=1,\dotsc,n$,
\begin{equation*}
\bfPhi(k,j)=\frac1{\sqrt{m}}\bfH(i_k,j)=\frac1{\sqrt{m}}\gamma_j(g_{i_k})=\frac1{\sqrt{m}}\gamma_j(0)=\frac1{\sqrt{m}}.
\end{equation*}
This means $\bfone\in\rmC(\bfPhi^*)$ and so Theorem~\ref{theorem.characterizing centered and axial} implies $\set{\bfphi_i}_{i=1}^{n}$ is axial.
If on the other hand $0\notin\calD$ then $\bfPhi$ only contains multiples of the nonconstant rows of $\bfH$.
Using the well-known fact that character tables are complex Hadamard matrices, this means the rows of $\bfPhi$ are orthogonal to the constant row of $\bfH$, namely $\bfone$.
That is, if $0\notin\calD$ then $\bfone\in\rmC(\bfPhi^*)^\perp=\rmN(\bfPhi)$, at which point Theorem~\ref{theorem.characterizing centered and axial} implies $\set{\bfphi_i}_{i=1}^{n}$ is centered.
The final conclusion follows from the fact that any translate $g+\calD$ of a difference set $\calD$ is another difference set: taking $m<n$ ensures $\calD$ is a proper subset of $\calG$, and so $g+\calD$ will contain $0$ when $g\in -D$ and will not contain $0$ otherwise.
\end{proof}

For example, the element $0$ does not lie in the difference set $\set{1,2,4}$ of $\bbZ_7$ and so the ETF given in~\eqref{equation.3x7 example} is centered.
Meanwhile, subtracting $1$ from this difference set yields another difference set $\set{0,1,3}$ and its corresponding $3\times 7$ ETF is axial.
We also remark that the complement of a difference set is another difference set.
For example, $\set{0,3,5,6}$ is also a difference set of $\bbZ_7$, and the corresponding $4\times 7$ harmonic ETF is a Naimark complement of~\eqref{equation.3x7 example}.
Note that since $0\in\set{0,3,5,6}$, this $4\times 7$ ETF is axial; this also follows from Theorem~\ref{theorem.ETF operations}.(c) and the fact that it has a centered Naimark complement~\eqref{equation.3x7 example}.

\subsection{Steiner equiangular tight frames with centroidal symmetry}

Steiner ETFs are formed by taking a tensor-like combination of a unimodular regular simplex with a special type of finite geometry~\cite{FickusMT12}.
Specifically, given integer parameters $2\leq k<v$, a corresponding \textit{balanced incomplete block design}, denoted a $\BIBD(v,k,1)$, is a $v$-element set $\calV$ along with a set $\calB$ of $k$-element subsets of $\calV$ (called \textit{blocks}) with the property that the number of blocks that contain a given vertex $v\in\calV$ is some constant $r$, as well as the property that any pair of distinct vertices $v,v'\in\calV$ is contained in exactly one block (a finite geometry version of the property that two distinct points determine a unique line).
Denoting the number of blocks as $b$, it is not hard to show that we necessarily have $r=\tfrac{v-1}{k-1}$ and $b=\tfrac{v(v-1)}{k(k-1)}$.
BIBDs of this type are also called $(2,k,v)$-\textit{Steiner systems}.
Arranging the indicator functions of our blocks as the rows of a $\set{0,1}$-valued incidence matrix $\bfB\in\bbR^{b\times v}$,
these properties are equivalent to having that each row of $\bfB$ sums to $k$, each column of $\bfB$ sums to $r$, and that the dot product of any two distinct columns of $\bfB$ is one.

For example, when $v=4$ and $k=2$ we can take our blocks to be all $2$-element subsets of $\calV=\set{1,2,3,4}$, namely $\calB=\set{\set{1,2},\set{3,4},\set{1,3},\set{2,4},\set{1,4},\set{2,3}}$.
Here, $r=3$, $b=6$, and the corresponding $6\times 4$ incidence matrix is
\begin{equation}
\label{equation.6x4 incidence}
\bfB
=\left[\begin{array}{cccc}1&1&0&0\\0&0&1&1\\1&0&1&0\\0&1&0&1\\1&0&0&1\\0&1&1&0\end{array}\right].
\end{equation}

There are several known infinite families of such BIBDs, including finite affine and projective geometries.
Moreover, as detailed in~\cite{FickusMT12}, every $\BIBD(v,k,1)$ yields an ETF of $n=v(r+1)$ vectors in a space of dimension $m=b$.
The construction involves replacing each ``$1$" in the incidence matrix $\bfB$ with a row of an $r\times (r+1)$ unimodular regular simplex $\bfS$,
which itself is obtained by removing any row from an $(r+1)\times(r+1)$ complex Hadamard matrix $\bfH$.
For example, for the $\BIBD(4,2,1)$ given in~\eqref{equation.6x4 incidence} we have $r=3$ and so we can use a $3\times 4$ unimodular regular simplex obtained by removing the top row of the standard $4\times 4$ Hadamard matrix:
\begin{equation}
\label{equation.3x4 unimodular simplex}
\bfH=\left[\begin{array}{rrrr}1&1&1&1\\1&-1&1&-1\\1&1&-1&-1\\1&-1&-1&1\end{array}\right],
\qquad
\bfS=\left[\begin{array}{rrrr}1&-1&1&-1\\1&1&-1&-1\\1&-1&-1&1\end{array}\right].
\end{equation}
In general, to form an ETF from a $\BIBD(v,k,1)$ and an $r\times(r+1)$ unimodular regular simplex $\bfS$ we convert each of the $v$ columns of the BIBD's $b\times v$ incidence matrix $\bfB$ into a $b\times(r+1)$ matrix by replacing each ``$1$" in that column with a distinct row of $\bfS$ and replacing each ``$0$" in that column with a $1\times(r+1)$ row of zeros.
Horizontally concatenating these $v$ matrices of size $b\times(r+1)$ yields a matrix of size $b\times v(r+1)$ whose columns, when normalized by a factor of \smash{$r^{-\frac12}$}, form an ETF~\cite{FickusMT12}.

For example, combining the $6\times 4$ BIBD incidence matrix $\bfB$ from~\eqref{equation.6x4 incidence} with the $3\times 4$ unimodular regular simplex $\bfS$ from~\eqref{equation.3x4 unimodular simplex} yields the following ETF of $16$ vectors in $\bbR^6$:
\begin{equation}
\label{equation.6x16 Steiner ETF}
\bfPhi
=\frac1{\sqrt{3}}\left[\begin{array}{rrrr|rrrr|rrrr|rrrr}
1&-1&\phantom{+{}}1&-1&\phantom{+{}}1&-1&\phantom{+{}}1&-1& 0& 0& 0& 0& 0& 0& 0& 0\\
0& 0& 0& 0& 0& 0& 0& 0&\phantom{+{}}1&-1&\phantom{+{}}1&-1&\phantom{+{}}1&-1&\phantom{+{}}1&-1\\
1&\phantom{+{}}1&-1&-1& 0& 0& 0& 0&\phantom{+{}}1&\phantom{+{}}1&-1&-1& 0& 0& 0& 0\\
0& 0& 0& 0&\phantom{+{}}1&\phantom{+{}}1&-1&-1& 0& 0& 0& 0&\phantom{+{}}1&\phantom{+{}}1&-1&-1\\
1&-1&-1&\phantom{+{}}1& 0& 0& 0& 0& 0& 0& 0& 0&\phantom{+{}}1&-1&-1&\phantom{+{}}1\\
0& 0& 0& 0&\phantom{+{}}1&-1&-1&\phantom{+{}}1&\phantom{+{}}1&-1&-1&\phantom{+{}}1& 0& 0& 0& 0
\end{array}\right].
\end{equation}
Here, the vertical lines delineate the submatrices of $\bfPhi$ arising from the individual columns of $\bfB$.

Note that in general, given any $\BIBD(v,k,1)$, a corresponding unimodular regular simplex $\bfS$ always exists.
For example, we can always form $\bfS$ by removing a row from an $(r+1)\times(r+1)$ discrete Fourier transform.
However, if we want the resulting ETF to be real we need to form $\bfS$ by removing a row from an $(r+1)\times(r+1)$ real Hadamard matrix $\bfH$.
Moreover, \textit{Fisher's inequality} on BIBD parameters states $b\geq v$ which in turn implies $r=\tfrac{bk}{v}\geq k\geq 2$.
As such, the known half of the Hadamard conjecture requires $r+1$ to be divisible by $4$, namely that $r\equiv 3\bmod 4$.

Note the ETF in~\eqref{equation.6x16 Steiner ETF} is centered since its columns sum to zero.
In fact, we have a stronger property: the columns of each of its $6\times 4$ block submatrices sum to zero.
This phenomena occurs in general:
\begin{theorem}
\label{theorem.BIBD implies centered}
If there exists a $\BIBD(v,k,1)$ then there exists a centered ETF of $v(r+1)$ vectors for $\bbF^b$ where \smash{$r=\tfrac{v-1}{k-1}$} and \smash{$b=\tfrac{v(v-1)}{k(k-1)}$}.
Moreover, if there exists a real Hadamard matrix of size $r+1$, this ETF can be chosen to be real.
\end{theorem}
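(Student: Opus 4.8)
The plan is to exhibit the claimed centered ETF as a particular instance of the Steiner construction reviewed above, choosing the unimodular regular simplex $\bfS$ so that the column sum of the resulting synthesis operator $\bfPhi$ vanishes. Recall that $\bfPhi$ has the block form $\bfPhi=[\bfPhi_1\ \cdots\ \bfPhi_v]$, where the $j$th block $\bfPhi_j$ is the $b\times(r+1)$ matrix obtained from the $j$th column of the incidence matrix $\bfB$ by replacing each $1$ with a distinct row of $\bfS$ and each $0$ with a $1\times(r+1)$ row of zeros. By Definition~\ref{definition.centered and axial}, centering is exactly the statement $\bfPhi\bfone=\bfzero$, i.e.\ $\bfone\in\rmN(\bfPhi)$. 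Since the all-ones vector decomposes blockwise, $\bfPhi\bfone=\sum_{j=1}^{v}\bfPhi_j\bfone$, and the overall normalizing scalar $r^{-1/2}$ is irrelevant to this identity, it suffices to arrange that \emph{each} block has vanishing column sum, namely $\bfPhi_j\bfone=\bfzero$ for every $j$.

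The key step is to form $\bfS$ by deleting the constant row from an $(r+1)\times(r+1)$ complex Hadamard matrix $\bfH$ whose first row is all ones; the discrete Fourier transform is such a matrix, so this choice is always available. Because the rows of any complex Hadamard matrix are mutually orthogonal, every surviving row of $\bfH$---that is, every row of $\bfS$---is orthogonal to the all-ones row and hence sums to zero. With this choice, the $i$th entry of $\bfPhi_j\bfone$ is zero in every case: if $\bfB(i,j)=0$ then the $i$th row of $\bfPhi_j$ is identically zero, while if $\bfB(i,j)=1$ then that row is a row of $\bfS$, whose entries sum to zero by construction. Thus $\bfPhi_j\bfone=\bfzero$ for each $j$, whence $\bfPhi\bfone=\bfzero$, and the resulting Steiner ETF of $v(r+1)$ vectors in $\bbF^b$ is centered.

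For the final claim, suppose a real Hadamard matrix of size $r+1$ exists. Scaling each of its columns by $\pm1$---an operation that preserves both realness and the Hadamard property---we may assume its first row is all ones; deleting that row yields a real simplex $\bfS$ whose rows again sum to zero, and running the construction with this real $\bfS$ produces a real centered ETF. I do not expect a serious obstacle here, since centering depends only on each row of $\bfS$ summing to zero and is completely independent of the bijection used to assign simplex rows to the $1$'s within a column of $\bfB$; the sole point needing a moment's care is the availability of a Hadamard matrix with a constant row of the required size, which is immediate in the complex case via the Fourier matrix and follows from column sign-normalization in the real case.
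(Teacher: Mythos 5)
Your proposal is correct and follows essentially the same route as the paper's proof: normalize an $(r+1)\times(r+1)$ (real or complex) Hadamard matrix so its first row is all ones, delete that row to obtain a simplex $\bfS$ whose rows each sum to zero, and conclude that every row of the Steiner synthesis operator $\bfPhi$ sums to zero, so $\bfPhi\bfone=\bfzero$ and the ETF is centered. The only cosmetic difference is that the paper normalizes by dividing each column of $\bfH$ by its first entry (which covers both the real and complex cases at once), whereas you invoke the Fourier matrix in the complex case and $\pm1$ column scaling in the real case; this changes nothing of substance.
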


\begin{proof}
Let $\bfH$ be any complex Hadamard matrix of size $r+1$, such as a discrete Fourier transform.
Dividing the columns of $\bfH$ by their first entries, we can assume without loss of generality that every entry of the first row of $\bfH$ is $1$.
Removing this first row yields an $r\times(r+1)$ unimodular regular simplex $\bfS$.
Since the rows of $\bfH$ are orthogonal, every row of $\bfS$ is orthogonal to $\bfone$, meaning the columns of $\bfS$ sum to zero.
As such, the columns of the corresponding Steiner ETF $\bfPhi$ sum to zero, meaning it is centered.
For the final conclusion, note that if $\bfH$ can be chosen to be a real Hadamard matrix, this same argument shows that $\bfPhi$ is a real centered ETF.
\end{proof}

As we shall see in the next section, not every $\BIBD(v,k,1)$ yields an axial ETF.
In particular, we can use results from the SRG literature to conclude that there is no axial $7\times 28$ real ETF.
This is despite the fact that there is a $7\times 28$ real Steiner ETF arising from a $\BIBD(7,3,1)$ known as the \textit{Fano plane}.
Nevertheless, as we now discuss, there are infinite families of BIBDs that do yield axial ETFs.

In particular, a \textit{parallel class} of a $\BIBD(v,k,1)$ is a subcollection of its blocks $\calB$ that forms a partition of its vertex set $\calV$.
For example, for the $\BIBD(4,2,1)$ whose incidence matrix is given in~\eqref{equation.6x4 incidence}, $\set{\set{1,2},\set{3,4}}$ is a parallel class,
and so are both $\set{\set{1,3},\set{2,4}}$ and $\set{\set{1,4},\set{2,3}}$.
When this happens---when the blocks themselves can be partitioned so that each subcollection of blocks is a partition of the vertex set---the BIBD is called \textit{resolvable}, and is commonly denoted an $\RBIBD(v,k,1)$.
As detailed in~\cite{JasperMF14}, there are several known infinite families of RBIBDs, and their resulting Steiner ETFs can be rotated so that the entries of their synthesis matrices have constant modulus, a property helpful in waveform design and coding applications.
As we now demonstrate, every BIBD that contains at least one parallel class---including every RBIBD---yields an axial Steiner ETF:

\begin{theorem}
\label{theorem.RBIBD implies axial}
If there exists a $\BIBD(v,k,1)$ that contains at least one parallel class,
then there exists an axial ETF of $v(r+1)$ vectors for $\bbF^b$ where \smash{$r=\tfrac{v-1}{k-1}$} and \smash{$b=\tfrac{v(v-1)}{k(k-1)}$}.
Moreover, if there exists a real Hadamard matrix of size $r+1$, this ETF can be chosen to be real.
\end{theorem}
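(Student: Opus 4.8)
The plan is to prove axiality directly from the characterization in Theorem~\ref{theorem.characterizing centered and axial}(b)(ii): the constructed ETF is axial precisely when $\bfone\in\rmC(\bfPhi^*)$, i.e.\ when the all-ones vector lies in the row span of the synthesis operator $\bfPhi$. So I would build the Steiner ETF in such a way that $\bfone$ (up to a scalar) is literally a sum of certain rows of $\bfPhi$. The idea is to dualize the construction behind Theorem~\ref{theorem.BIBD implies centered}: there one \emph{deletes} the all-ones row of an $(r+1)\times(r+1)$ complex Hadamard matrix $\bfH$ so that the columns of the simplex $\bfS$ sum to zero and hence $\bfPhi$ is centered; here I would instead \emph{retain} the all-ones row, so that $\bfS$ possesses a constant row $\bfone^*$, and then exploit the parallel class to make the right rows of $\bfPhi$ add up to $\bfone^*$ in every block of columns.

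Concretely, first I would normalize $\bfH$ (by dividing each column by its top entry) so that its first row is all ones, and form $\bfS$ by deleting any row \emph{other} than the first; removing any single row of a Hadamard matrix yields a unimodular regular simplex (as already noted in the text), so $\bfS$ is a legitimate simplex for the Steiner construction of~\cite{FickusMT12}, and one of its rows is the constant row $\bfone^*$. Next, let $\mathcal{P}$ be a parallel class of the $\BIBD(v,k,1)$, so that each vertex $j\in\calV$ lies in exactly one block $P(j)\in\mathcal{P}$. Recall that the Steiner construction fills, for each vertex $j$, the $r$ blocks through $j$ with the $r$ distinct rows of $\bfS$ in a free but column-wise bijective fashion, and the resulting frame is an ETF regardless of how these assignments are made. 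I would use this freedom to insist that, in the column block associated with each vertex $j$, the distinguished block $P(j)$ is always assigned the constant row $\bfone^*$ (the remaining $r-1$ rows being distributed arbitrarily).

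With this choice, the verification is a short bookkeeping argument. Summing the rows of $\bfPhi$ indexed by the blocks in $\mathcal{P}$ and restricting to the $r+1$ columns associated with a fixed vertex $j$, only the row indexed by $P(j)$ is nonzero there (the other parallel-class blocks miss $j$), and by construction it contributes the scaled constant row. Hence this restriction equals a fixed multiple of $\bfone^*\in\bbF^{r+1}$ for \emph{every} $j$, so the full sum of these rows is a nonzero multiple of $\bfone^*\in\bbF^{n}$ with $n=v(r+1)$. Conjugate-transposing, $\bfone$ lies in the row span $\rmC(\bfPhi^*)$, and Theorem~\ref{theorem.characterizing centered and axial} together with Definition~\ref{definition.centered and axial} gives that the ETF is axial. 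The real case follows verbatim: if a real Hadamard matrix of size $r+1$ exists, normalize it (flipping column signs keeps it real and $\pm1$-valued) and delete a non-first row, producing a real $\bfS$ with a constant row and hence a real axial Steiner ETF.

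I expect the only genuinely delicate point to be the \emph{coordination} of the per-vertex assignments: the parallel-class property is exactly what guarantees that forcing one simplex row to be constant in each column block does not conflict across vertices, since the partition ensures each vertex contributes that constant row through a \emph{distinct} parallel-class block and every vertex is covered exactly once. Everything else---the validity of $\bfS$, the ETF property being independent of the assignment, and the row-sum computation---is routine given the earlier results and the description of the Steiner construction.
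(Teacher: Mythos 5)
Your proposal is correct and follows essentially the same route as the paper: normalize $\bfH$ so its first row is constant, retain that constant row when forming $\bfS$, and use the parallel class to show that the rows of $\bfPhi$ indexed by its blocks sum to a constant row, whence $\bfone\in\rmC(\bfPhi^*)$. The only difference is that you make explicit the choice of which row of $\bfS$ is assigned to each parallel-class block, a detail the paper handles implicitly via its ordering convention.
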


\begin{proof}
Without loss of generality, the rows of the BIBD's $b\times v$ incidence matrix $\bfB$ can be arranged so that the top $\frac{v}{k}$ rows form a partition for $\calV$.
That is, the top $\frac{v}{k}$ rows of $\bfB$ sum to a row of ones.
Let $\bfH$ be any complex Hadamard matrix of size $r+1$.
Dividing the columns of $\bfH$ by their first entries, we can assume without loss of generality that every entry of the first row of $\bfH$ is $1$.
Now form a unimodular regular simplex $\bfS$ by removing any row of $\bfH$ but its first.
That is, let all the entries of the first row of $\bfS$ be $1$.
Altogether, these facts imply that the first $\frac{v}{k}$ rows of $\bfPhi$ sum to a row whose entries are constant.
That is, $\bfone\in\rmC(\bfPhi^*)$ and so $\bfPhi$ is axial.
Finally, note that if $\bfH$ can be chosen to be real, then the resulting $\bfPhi$ is real.
\end{proof}

For example, consider the Steiner ETF that arises from the $\RBIBD(4,2,1)$ given in~\eqref{equation.6x4 incidence} and the $3\times 4$ unimodular regular simplex $\bfS$ obtained by removing the second row of the real Hadamard matrix $\bfH$ in~\eqref{equation.3x4 unimodular simplex}, namely
\begin{equation*}
\bfPhi
=\frac1{\sqrt{3}}\left[\begin{array}{rrrr|rrrr|rrrr|rrrr}
1&\phantom{+{}}1&\phantom{+{}}1&\phantom{+{}}1&\phantom{+{}}1&\phantom{+{}}1&\phantom{+{}}1&\phantom{+{}}1& 0& 0& 0& 0& 0& 0& 0& 0\\
0& 0& 0& 0& 0& 0& 0& 0&\phantom{+{}}1&\phantom{+{}}1&\phantom{+{}}1&\phantom{+{}}1&\phantom{+{}}1&\phantom{+{}}1&\phantom{+{}}1&\phantom{+{}}1\\
1&\phantom{+{}}1&-1&-1& 0& 0& 0& 0&\phantom{+{}}1&\phantom{+{}}1&-1&-1& 0& 0& 0& 0\\
0& 0& 0& 0&\phantom{+{}}1&\phantom{+{}}1&-1&-1& 0& 0& 0& 0&\phantom{+{}}1&\phantom{+{}}1&-1&-1\\
1&-1&-1&\phantom{+{}}1& 0& 0& 0& 0& 0& 0& 0& 0&\phantom{+{}}1&-1&-1&\phantom{+{}}1\\
0& 0& 0& 0&\phantom{+{}}1&-1&-1&\phantom{+{}}1&\phantom{+{}}1&-1&-1&\phantom{+{}}1& 0& 0& 0& 0
\end{array}\right].
\end{equation*}
Clearly, the constant vectors are in the row space of this matrix, since they lie in the span of the first two rows.
As such, this ETF is axial.
In the next section, we discuss how real axial Steiner ETFs in particular lead to new examples of SRGs.

\section{Real equiangular tight frames with centroidal symmetry}

In Section~2, we reviewed how every real ETF can be signed so as to equate it with an SRG on $v=n-1$ vertices in which $\mu=\tfrac k2$.
In this section, we show that every real ETF with centroidal symmetry equates to an SRG on $v=n$ vertices where $v=4k-2\lambda-2\mu$.
When combined with the results of the previous section, this new equivalence will imply the existence of several new SRGs.
We begin with some basic facts about this type of SRG:
\begin{lemma}
\label{lemma.SRGs with v=4k-2 lambda-2 mu}
An $\SRG(v,k,\lambda,\mu)$ has $v=4k-2\lambda-2\mu$ if and only if $\mu=\tfrac{k}{2}\tfrac{v-2k-2}{v-2k-1}$.
In particular, in this case $v-2k-1$ divides $k$.
Moreover, the graph complement of an SRG of this type is another SRG of this type.
\end{lemma}

\begin{proof}
First assume $v=4k-2\lambda-2\mu$.
Substituting $\lambda=2k-\mu-\frac v2$ into~\eqref{equation.srg parameter relation} gives
\begin{equation*}
(v-k-1)\mu
=k(k-\lambda-1)
=k(k-2k+\mu+\tfrac v2-1)
=\tfrac k2(v-2k-2)+k\mu.
\end{equation*}
Solving for $\mu$ then gives $\mu=\frac{k}{2}\frac{v-2k-2}{v-2k-1}$;
note here that since $v=4k-2\lambda-2\mu$ we know $v$ is even and so $v-2k-1\neq0$.
Conversely, assume $\mu=\tfrac{k}{2}\tfrac{v-2k-2}{v-2k-1}$ and so~\eqref{equation.srg parameter relation} becomes
\begin{equation*}
k(k-\lambda-1)
=(v-k-1)\mu
=(v-k-1)\tfrac{k}{2}\tfrac{v-2k-2}{v-2k-1}.
\end{equation*}
Solving for $\lambda$ gives $\lambda=(k-1)-\tfrac{v-k-1}{2}\tfrac{v-2k-2}{v-2k-1}$ and so we see that
\begin{equation*}
4k-2\lambda-2\mu
=4k-2(k-1)+(v-k-1)\tfrac{v-2k-2}{v-2k-1}-k\tfrac{v-2k-2}{v-2k-1}
=2k+2+(v-2k-2)
=v,
\end{equation*}
as claimed.
For the second claim, note that since $v-2k-1$ is relatively prime to $v-2k-2$, the integrality of $\mu$ requires $v-2k-1$ to divide $k$.
For the final conclusion, note the parameters~\eqref{equation.SRG complement parameters} of the graph complement of an $\SRG(v,k,\lambda,\mu)$ with $v=4k-2\lambda-2\mu$ satisfy
\begin{equation*}
4\tilde{k}-2\tilde{\lambda}-2\tilde{\mu}
=4(v-k-1)-2(v-2k-2+\mu)-2(v-2k+\lambda)
=4k-2\lambda-2\mu
=v
=\tilde{v}.\qedhere
\end{equation*}
\end{proof}

In the following subsection, we show that any centroid-symmetric real ETF leads to an SRG of this type,
and discuss the ramifications of this fact.

\subsection{Constructing SRGs from real ETFs with centroidal symmetry}

In the following result, we show that any $m\times n$ real ETF with centroidal symmetry yields an SRG on $v=n$ vertices;
this is in contrast to the traditional theory in which an $m\times n$ real ETF is equated to an SRG on $v=n-1$ vertices, cf.~Theorem~\ref{theorem.traditional ETF SRG equivalence}.

\begin{theorem}
\label{theorem.real centroid-symmetric ETF implies SRG}
Let $\set{\bfphi_i}_{i=1}^{n}$ be a centered or axial ETF for $\bbR^m$ where $m<n$, cf.~Definition~\ref{definition.centered and axial}.
Letting $\alpha=\tfrac{n}{m}$ and $\beta=[\frac{n-m}{m(n-1)}]^{\frac12}$ we have $\bfA=\tfrac1{2\beta}\bfPhi^*\bfPhi-\tfrac{\beta+1}{2\beta}\bfI+\tfrac12\bfJ$ is the adjacency matrix of an $\SRG(v,k,\lambda,\mu)$ with $v=4k-2\lambda-2\mu$.
In particular, centered ETFs have parameters
\begin{equation*}
v=n,
\qquad
k=\tfrac{n-1}{2}-\tfrac{1}{2\beta},
\qquad
\lambda=\tfrac n4-1+\tfrac{\alpha-4}{4\beta},
\qquad
\mu=\tfrac{n}4-\tfrac{\alpha}{4\beta},
\end{equation*}
while for axial ETFs we have
\begin{equation*}
v=n,
\qquad
k=\tfrac{n-1}{2}+\tfrac{\alpha-1}{2\beta},
\qquad
\lambda=\tfrac n4-1+\tfrac{3\alpha-4}{4\beta},
\qquad
\mu=\tfrac{n}4+\tfrac{\alpha}{4\beta}.
\end{equation*}
Moreover, the parameters of the graph complement of the SRG obtained from a centered (axial) ETF equal the parameters of the SRG obtained from its axial (centered) Naimark complements.
\end{theorem}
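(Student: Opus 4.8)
The plan is to leverage the centroidal symmetry to force the graph $\bfA$ to be regular, since regularity is exactly what is needed to expand the square in the real-ETF condition~\eqref{equation.ETF condition on adjacency matrix} and recognize $\bfA$ as an SRG adjacency matrix on all $v=n$ vertices, rather than the $v=n-1$ vertices of Theorem~\ref{theorem.traditional ETF SRG equivalence}.

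First I would recall from the discussion surrounding~\eqref{equation.ETF to SRG conversion} that, because $\set{\bfphi_i}_{i=1}^n$ is a real ETF, the matrix $\bfA=\frac1{2\beta}\bfPhi^*\bfPhi-\frac{\beta+1}{2\beta}\bfI+\frac12\bfJ$ automatically has zero diagonal and off-diagonal entries in $\set{0,1}$, so it is a genuine adjacency matrix, and that with $\bfG=\bfPhi^*\bfPhi=2\beta\bfA+(\beta+1)\bfI-\beta\bfJ$ the identity $\bfG^2=\alpha\bfG$ is precisely~\eqref{equation.ETF condition on adjacency matrix}. The key point is that by Theorem~\ref{theorem.characterizing centered and axial}, centroidal symmetry says $\bfone$ is an eigenvector of $\bfPhi^*\bfPhi$, hence of $\bfA$, so $\bfA$ is regular: $\bfA\bfJ=\bfJ\bfA=k\bfJ$ for some $k$. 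I would extract $k$ from $\bfA\bfone=\frac1{2\beta}\bfPhi^*\bfPhi\bfone-\frac{\beta+1}{2\beta}\bfone+\frac n2\bfone$, substituting $\bfPhi^*\bfPhi\bfone=\bfzero$ in the centered case and $\bfPhi^*\bfPhi\bfone=\alpha\bfone$ in the axial case; this gives the two advertised values of $k$.

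With regularity in hand I would expand the square in~\eqref{equation.ETF condition on adjacency matrix}, using $\bfA\bfJ=\bfJ\bfA=k\bfJ$ and $\bfJ^2=n\bfJ$, and solve for $\bfA^2$ as a linear combination $x\bfA+y\bfI+z\bfJ$. By the argument following~\eqref{equation.definition of srg}, this alone certifies that $\bfA$ is the adjacency matrix of an SRG, with $\mu=z$, $\lambda=x+z$, and $k=y+z$; inserting $\alpha=\frac nm$, $\beta$, and the two values of $k$ and simplifying produces the stated parameter formulas, after which $v=4k-2\lambda-2\mu$ follows either by direct substitution or from Lemma~\ref{lemma.SRGs with v=4k-2 lambda-2 mu}. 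This simplification is the bulk of the work, but it is routine bookkeeping; the only conceptual caveat is to exclude the trivial cases $\bfA=\bfzero$ and $\bfA=\bfJ-\bfI$ (arising from the regular simplex and its one-dimensional Naimark complement), so that $k,\lambda,\mu$ genuinely occur as entries of $\bfA^2$ and are therefore nonnegative integers.

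For the final ``moreover'' statement I would invoke Theorem~\ref{theorem.ETF operations}(c), so that the Naimark complement of a centered (axial) ETF is axial (centered). Rather than recompute parameters, I would work at the level of the adjacency matrices. Writing $\tilde\alpha=\frac{n}{n-m}$ and $\tilde\beta=[\frac{m}{(n-m)(n-1)}]^{\frac12}$ for the complementary ETF, Lemma~\ref{lemma.Naimark complements} gives $\frac1{\tilde\alpha}\tilde\bfG=\bfI-\frac1\alpha\bfG$, hence $\tilde\bfG=\tilde\alpha\bfI-\frac{\tilde\alpha}{\alpha}\bfG$; substituting this into $\tilde\bfA=\frac1{2\tilde\beta}\tilde\bfG-\frac{\tilde\beta+1}{2\tilde\beta}\bfI+\frac12\bfJ$ and using the identities $\frac{\tilde\alpha}{\alpha}=\frac{\tilde\beta}{\beta}=\frac{m}{n-m}$ and $\frac{\tilde\alpha-1}{\tilde\beta}=\frac1\beta$ should collapse everything to $\tilde\bfA=\bfJ-\bfI-\bfA$. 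Thus the SRG arising from the Naimark complement is literally the graph complement of the SRG arising from the original, which is stronger than equality of parameters and agrees with~\eqref{equation.SRG complement parameters}. I expect no deep obstacle here, only the need to keep the $\alpha,\beta,\tilde\alpha,\tilde\beta$ algebra organized so that these two collapsing identities are transparent.
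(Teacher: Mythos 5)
Your proof of the main claims is correct and follows the paper's own argument essentially verbatim: centroidal symmetry gives $\bfA\bfone=k\bfone$ via Theorem~\ref{theorem.characterizing centered and axial}, which licenses expanding the square in~\eqref{equation.ETF condition on adjacency matrix}, writing $\bfA^2$ as a combination of $\bfA$, $\bfI$, $\bfJ$, and reading off the parameters; your extraction of the two values of $k$ from $\bfA\bfone$ in the centered and axial cases is exactly the paper's computation. Where you genuinely diverge is the final ``moreover'' statement. The paper proves it at the level of parameters: it observes that for SRGs with $v=4k-2\lambda-2\mu$, Lemma~\ref{lemma.SRGs with v=4k-2 lambda-2 mu} makes all four parameters functions of $v$ and $k$ alone, and then checks that $\tilde{k}=v-k-1=\tfrac{n-1}{2}+\tfrac{1}{2\beta}$ agrees with the ``$k$'' of the SRG from the axial Naimark complement via the identity $\tfrac{\tilde{\beta}}{\beta}=\tilde{\alpha}-1$. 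You instead work at the level of Gram matrices, using $\tilde{\bfG}=\tilde{\alpha}\bfI-\tfrac{\tilde{\alpha}}{\alpha}\bfG$ together with $\tfrac{\tilde{\alpha}}{\alpha\tilde{\beta}}=\tfrac1{\beta}$ and $\tfrac{\tilde{\alpha}-1}{\tilde{\beta}}=\tfrac1{\beta}$ to show $\tilde{\bfA}=\bfJ-\bfI-\bfA$ outright; I checked this algebra and it collapses as you claim. Your route buys a strictly stronger conclusion --- the two graphs are literal complements, not merely cospectral in parameters --- and it makes rigorous the remark the paper only states informally after Theorem~\ref{theorem.traditional ETF SRG equivalence}; the paper's route avoids the $\tilde{\alpha},\tilde{\beta}$ bookkeeping by leaning on Lemma~\ref{lemma.SRGs with v=4k-2 lambda-2 mu}. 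The one place you wave your hands --- ``routine bookkeeping'' for the explicit $\lambda$ and $\mu$ formulas --- is genuinely routine, and your caveat about the degenerate cases $\bfA=\bfzero$ and $\bfA=\bfJ-\bfI$ matches the convention the paper adopts after~\eqref{equation.definition of srg}.
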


\begin{proof}
Writing $\bfPhi^*\bfPhi$ in terms of $\bfA$ and then substituting this relation into $\alpha\bfPhi^*\bfPhi=(\bfPhi^*\bfPhi)^2$ gives
\begin{equation*}
2\alpha\beta\bfA+\alpha(\beta+1)\bfI-\alpha\beta\bfJ
=\alpha[2\beta\bfA+(\beta+1)\bfI-\beta\bfJ]
=[2\beta\bfA+(\beta+1)\bfI-\beta\bfJ]^2.
\end{equation*}
To continue simplifying, we expand the above square.
This is possible since $\set{\bfphi_i}_{i=1}^{n}$ has centroidal symmetry:
Theorem~\ref{theorem.characterizing centered and axial} and Definition~\ref{definition.centered and axial} gives $\bfA\bfone=k\bfone$ for some $k$,
which implies $\bfA\bfJ=\bfA\bfone\bfone^*=k\bfone\bfone^*=k\bfJ$ and so $\bfJ\bfA=\bfJ^*\bfA^*=(\bfA\bfJ)^*=(k\bfJ)^*=k\bfJ$.
Thus,
\begin{align*}
2\alpha\beta\bfA+\alpha(\beta+1)\bfI-\alpha\beta\bfJ
&=[2\beta\bfA+(\beta+1)\bfI-\beta\bfJ]^2\\
&=4\beta^2\bfA^2+(\beta+1)^2\bfI+\beta^2 n\bfJ+4\beta(\beta+1)\bfA-2\beta(\beta+1)\bfJ-4\beta^2 k\bfJ\\
&=4\beta^2\bfA^2+4\beta(\beta+1)\bfA+(\beta+1)^2\bfI+[\beta^2 n-2\beta(\beta+1)-4\beta^2 k]\bfJ.
\end{align*}
Solving for $\bfA^2$ we see that it satisfies an equation of the form $\bfA^2=x\bfA+y\bfI+z\bfJ$ and so $\bfA$ is the adjacency matrix of a SRG on $v=n$ vertices.
Specifically,
\begin{align}
\nonumber
\bfA^2
&=\tfrac1{4\beta^2}\bigset{[2\alpha\beta-4\beta(\beta+1)]\bfA+[\alpha(\beta+1)-(\beta+1)^2]\bfI+[4\beta^2 k+2\beta(\beta+1)-\beta^2 n-\alpha\beta]\bfJ}\\
\label{equation.proof of CS ETF gives SRG 1}
&=\tfrac{\alpha-2\beta-2}{2\beta}\bfA+\tfrac{(\alpha-\beta-1)(\beta+1)}{4\beta^2}\bfI+\tfrac{4\beta k+2\beta+2-\beta n-\alpha}{4\beta}\bfJ.
\end{align}
Here, recall from~\eqref{equation.definition of srg} that the coefficients of $\bfA$, $\bfI$ and $\bfJ$ are $\lambda-\mu$, $k-\mu$ and $\mu$, respectively, and so
\begin{equation*}
\lambda+\mu
=(\lambda-\mu)+2\mu
=\tfrac{\alpha-2\beta-2}{2\beta}+\tfrac{4\beta k+2\beta+2-\beta n-\alpha}{2\beta}
=2k-\tfrac n2.
\end{equation*}
Thus, $v=n=4k-2\lambda-2\mu$, as claimed.
To determine explicit expressions for $k$ and $\mu$ in terms of $m$ and $n$,
recall Definition~\ref{definition.centered and axial} gives $\bfPhi^*\bfPhi\bfone=\gamma\bfone$ where either $\gamma=0$ or $\gamma=\alpha$;
for succinct calculation, we write this as $\gamma=\frac12(\alpha\pm\alpha)$, where ``$+$" and ``$-$" correspond to the axial and centered cases, respectively.
As such,
\begin{equation*}
k\bfone
=\bfA\bfone
=(\tfrac1{2\beta}\bfPhi^*\bfPhi-\tfrac{\beta+1}{2\beta}\bfI+\tfrac12\bfJ)\bfone
=(\tfrac{\alpha\pm\alpha}{4\beta}-\tfrac{\beta+1}{2\beta}+\tfrac{n}{2})\bfone
=(\tfrac{n-1}{2}+\tfrac{\alpha\pm\alpha-2}{4\beta})\bfone,
\end{equation*}
implying \smash{$k=\tfrac{n-1}{2}+\tfrac{\alpha\pm\alpha-2}{4\beta}$}; choosing ``$+$" or ``$-$" gives the expressions for $k$ given in the statement of the result.
This in turns implies the expressions for $\mu$: the coefficient of $\bfJ$ in~\eqref{equation.proof of CS ETF gives SRG 1} is
\begin{equation*}
\mu
=\tfrac{4\beta k+2\beta+2-\beta n-\alpha}{4\beta}
=\tfrac{[2\beta n-2\beta+\alpha\pm\alpha-2]+2\beta+2-\beta n-\alpha}{4\beta}
=\tfrac{\beta n\pm\alpha}{4\beta}
=\tfrac{n}{4}\pm\tfrac{\alpha}{4\beta}.
\end{equation*}
Together, these facts allow one to quickly discover that $\lambda=\tfrac{4k-2\mu-v}{2}=\tfrac n4-1+\tfrac{(2\pm1)\alpha-4}{4\beta}$.

To help clarify the proof of the final statement, here we refer to SRGs whose parameters satisfy $v=4k-2\lambda-2\mu$ as \textit{special}.
By Lemma~\ref{lemma.SRGs with v=4k-2 lambda-2 mu}, the four parameters of any special SRG are determined by just $v$ and $k$.
In particular, to show two special SRGs have the same parameters, it suffices to prove they have identical values of $v$ and $k$.
Now fix any $m\times n$ real ETF that is centered; the argument is similar in the axial case.
We already know this ETF yields a special SRG with $v=n$ and $k=\tfrac{n-1}{2}-\tfrac{1}{2\beta}$.
Moreover, Lemma~\ref{lemma.SRGs with v=4k-2 lambda-2 mu} tells us its graph complement is special with parameters $\tilde{v}=v=n$ and
\begin{equation}
\label{equation.proof of CS ETF gives SRG 2}
\tilde{k}
=v-k-1
=n-(\tfrac{n-1}{2}-\tfrac{1}{2\beta})-1
=\tfrac{n-1}{2}+\tfrac{1}{2\beta}.
\end{equation}
Meanwhile, by Theorem~\ref{theorem.ETF operations}, the $(n-m)\times n$ real Naimark complements of this ETF are axial and so yield a special SRG whose ``$v$" parameter is $n$ and whose ``$k$" parameter is
\begin{equation}
\label{equation.proof of CS ETF gives SRG 3}
\tfrac{n-1}{2}+\tfrac{\tilde{\alpha}-1}{2\tilde{\beta}},
\end{equation}
where \smash{$\tilde{\alpha}=\tfrac{n}{n-m}$} and \smash{$\tilde{\beta}=[\frac{m}{(n-m)(n-1)}]^{\frac12}$}.
Since \smash{$\frac{\tilde{\beta}}{\beta}=\tilde{\alpha}-1$}, \eqref{equation.proof of CS ETF gives SRG 2} equals \eqref{equation.proof of CS ETF gives SRG 3} and so these two special SRGs have the same parameters, as claimed.
\end{proof}

Two remarks about this theorem.
First, our expressions for $\lambda$ and $\mu$ are superfluous, and are only included for the sake of completeness:
since $v=4k-2\lambda-2\mu$, Lemma~\ref{lemma.SRGs with v=4k-2 lambda-2 mu} gives \smash{$\mu=\frac k2\frac{v-2k-2}{v-2k-2}$}, at which point $\lambda=\frac12(4k-2\mu-v)$.
Second, we point out that this expression for $\mu$ is very close to the ``$\mu=\frac k2$" relationship encountered when identifying real ETFs with SRGs on $v=n-1$ vertices, cf.~Theorem~\ref{theorem.traditional ETF SRG equivalence}.

As an example of Theorem~\ref{theorem.real centroid-symmetric ETF implies SRG}, consider the $7\times 28$ real Steiner ETF that arises from a $\BIBD(7,3,1)$.
By Theorem~\ref{theorem.BIBD implies centered}, this ETF can be chosen to be centered.
Applying Theorem~\ref{theorem.real centroid-symmetric ETF implies SRG} to it gives the existence of an $\SRG(28,12,6,4)$.
Meanwhile, applying Theorem~\ref{theorem.real centroid-symmetric ETF implies SRG} to any one of its $21\times 28$ real Naimark complements gives the existence of an $\SRG(28,15,6,10)$; this is not surprising, since the graph arising from these Naimark complements is simply the graph complement of our $\SRG(28,12,6,4)$.

While illustrative, the above example is not very exciting, as the existence of these particular SRGs is already well-known~\cite{Brouwer07,Brouwer15}.
For a novel result, consider a slightly different problem: does there exist a $7\times 28$ real ETF that has axial symmetry?
(Equivalently, does there exist a centered $21\times 28$ real ETF?)
Note such an ETF cannot arise from Theorem~\ref{theorem.RBIBD implies axial} since $3$ does not divide $7$, implying any $\BIBD(7,3,1)$ cannot contain a parallel class.
But, is there some other way to construct such an ETF?
The answer is no: if such an ETF exists then Theorem~\ref{theorem.real centroid-symmetric ETF implies SRG} implies the existence of an $\SRG(28,18,12,10)$,
contradicting known necessary conditions on the existence of SRGs including the \textit{Krein bound} and the \textit{absolute bound}~\cite{Brouwer07,Brouwer15}.

The above discussion illustrates how the contrapositive of Theorem~\ref{theorem.real centroid-symmetric ETF implies SRG} allows the SRG literature to better inform the search for ETFs.
But, it is also useful when applied directly: by combining it with the constructions given in Section~3, we can construct SRGs from ETFs.

For example, it is well-known that for any $j\geq 2$ there exists McFarland difference sets of $m=2^{j-1}(2^{j}-1)$ elements in $\bbZ_n$ where $n=2^{2j}$~\cite{JungnickelPS07}.
By Theorem~\ref{theorem.difference set implies centroidal}, there thus exists $m\times n$ harmonic real ETFs that are centered, and other $m\times n$ harmonic real ETFs that are axial.
Applying Theorem~\ref{theorem.real centroid-symmetric ETF implies SRG} to these then gives SRGs with parameters
\begin{equation}
\label{equation.real McFarland}
v=2^{2j},
\quad
k=2^{j-1}(2^j\pm1),
\quad
\lambda=2^{j-1}(2^{j-1}\pm1),
\quad
\mu=2^{j-1}(2^{j-1}\pm1),
\end{equation}
where ``$+$" and ``$-$" correspond to the axial and centered cases, respectively.
By consulting the tables of~\cite{Brouwer07,Brouwer15}, we see that SRGs with these parameters are already very well known.

For another direct application of Theorem~\ref{theorem.real centroid-symmetric ETF implies SRG}, note that if there exists a $\BIBD(v,k,1)$ and a real Hadamard matrix of size $r+1$ where \smash{$r=\tfrac{v-1}{k-1}$} then applying this result to the centered real ETFs of Theorem~\ref{theorem.BIBD implies centered} yields an SRG with parameters
\begin{equation*}
v_{\mathrm{srg}}=v(r+1),
\qquad
k_{\mathrm{srg}}=\tfrac{(v-1)(r+1)}2,
\qquad
\lambda=\tfrac{(v+k-4)(r+1)}{4},
\qquad
\mu=\tfrac{(v-k)(r+1)}{4}.
\end{equation*}
When deriving the above parameters, it helps to observe that $\alpha=\tfrac{k(r+1)}{r}$ and $\beta=\tfrac1r$ for all Steiner ETFs~\cite{FickusMT12}.
We also have added ``srg" to the subscripts of the $v$ and $k$ parameters of the SRG here so as to not confuse them with the $v$ and $k$ parameters of the BIBD.
These SRGs are also not new.
In fact, this SRG construction is identical to one of Goethals and Seidel~\cite{GoethalsS70}.
This is not a coincidence:
though we have always known that the $m\times n$ Steiner ETFs of~\cite{FickusMT12} are a generalization of the SRGs of~\cite{GoethalsS70},
we only recently realized that these SRGs have $v=n$, not $v=n-1$;
it was our inquiry into this fact that originally led us to the concept of centroidal symmetry.
That is, from this perspective, Theorems~\ref{theorem.BIBD implies centered} and~\ref{theorem.real centroid-symmetric ETF implies SRG} are an abstraction of the construction of~\cite{GoethalsS70}.
As we now discuss, this abstraction is useful: though applying Theorem~\ref{theorem.real centroid-symmetric ETF implies SRG} to centered Steiner ETFs merely recreates the work of~\cite{GoethalsS70},
applying this same theorem to axial Steiner ETFs gives new SRGs:
\begin{corollary}
\label{corollary.SRGs from RBIBDs}
If there exists a $\BIBD(v,k,1)$ that contains a parallel class and if there exists a real Hadamard matrix of size $r+1$ where \smash{$r=\tfrac{v-1}{k-1}$}, then applying Theorem~\ref{theorem.real centroid-symmetric ETF implies SRG} to the axial real ETFs of Theorem~\ref{theorem.RBIBD implies axial} yields an SRG with parameters
\begin{equation*}
v_{\mathrm{srg}}=v(r+1),
\quad
k_{\mathrm{srg}}=\tfrac{(v+k-1)(r+1)}2,
\quad
\lambda=\tfrac{(v+3k-4)(r+1)}4,
\quad
\mu=\tfrac{(v+k)(r+1)}4.
\end{equation*}
\end{corollary}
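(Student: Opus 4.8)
The plan is to obtain this result as a direct application of Theorem~\ref{theorem.real centroid-symmetric ETF implies SRG} to the axial real ETFs furnished by Theorem~\ref{theorem.RBIBD implies axial}, so that essentially all of the work lies in substituting the Steiner-specific values of $\alpha$, $\beta$, $m$ and $n$ into the axial parameter formulas and simplifying. Under the stated hypotheses, Theorem~\ref{theorem.RBIBD implies axial} guarantees an axial real ETF of $n = v(r+1)$ vectors in $\bbR^{b}$ with $b = \tfrac{v(v-1)}{k(k-1)}$, so the hypotheses of Theorem~\ref{theorem.real centroid-symmetric ETF implies SRG} are met and I may invoke its axial parameter expressions, namely $k_{\mathrm{srg}}=\tfrac{n-1}{2}+\tfrac{\alpha-1}{2\beta}$, $\lambda=\tfrac n4-1+\tfrac{3\alpha-4}{4\beta}$ and $\mu=\tfrac n4+\tfrac{\alpha}{4\beta}$.

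The key inputs are the identities $\alpha = \tfrac{k(r+1)}{r}$ and $\beta = \tfrac1r$ that hold for every Steiner ETF, recorded just above the corollary. From these I would first isolate the two quantities that recur throughout: $\tfrac1\beta = r$ and, more importantly, $\tfrac{\alpha}{\beta} = r\alpha = k(r+1)$. The whole computation then hinges on the observation that after substitution a common factor of $(r+1)$ can be extracted from each expression. For example, substituting $n = v(r+1)$ and $\tfrac{\alpha-1}{2\beta} = \tfrac{k(r+1)-r}{2}$ gives $k_{\mathrm{srg}} = \tfrac{(v+k)(r+1)-(r+1)}{2} = \tfrac{(v+k-1)(r+1)}{2}$, as claimed; the expression for $\mu$ collapses immediately to $\tfrac{v(r+1)}{4}+\tfrac{k(r+1)}{4}=\tfrac{(v+k)(r+1)}{4}$, and for $\lambda$ the terms $-1$ and $\tfrac{3\alpha-4}{4\beta}=\tfrac{3k(r+1)-4r}{4}$ combine to produce numerator $(v+3k)(r+1)-4(r+1)=(v+3k-4)(r+1)$.

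A cleaner route to the same conclusion would bypass two of these computations entirely by invoking Lemma~\ref{lemma.SRGs with v=4k-2 lambda-2 mu}: since Theorem~\ref{theorem.real centroid-symmetric ETF implies SRG} already certifies that the resulting graph satisfies $v = 4k - 2\lambda - 2\mu$, its four parameters are determined by $v_{\mathrm{srg}}$ and $k_{\mathrm{srg}}$ alone, so it would suffice to verify those two and then read off $\mu = \tfrac{k}{2}\tfrac{v-2k-2}{v-2k-1}$ and $\lambda = \tfrac12(4k-2\mu-v)$. Because the entire argument is a routine, if slightly tedious, sequence of substitutions, there is no genuine conceptual obstacle here; the only point requiring real care is bookkeeping---keeping the BIBD parameters $v$, $k$, $r$ strictly distinct from the SRG parameters $v_{\mathrm{srg}}$, $k_{\mathrm{srg}}$ (hence the subscript convention), and extracting the common $(r+1)$ factor consistently so that each parameter lands in exactly the advertised form.
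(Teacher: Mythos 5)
Your proposal is correct and matches the paper's (implicit) argument exactly: the paper gives no separate proof for this corollary, relying on precisely the substitution of $\alpha=\tfrac{k(r+1)}{r}$ and $\beta=\tfrac1r$ into the axial parameter formulas of Theorem~\ref{theorem.real centroid-symmetric ETF implies SRG}, as noted in the discussion of the centered case just before the corollary. Your computations of $k_{\mathrm{srg}}$, $\lambda$ and $\mu$ all check out, and your observation that Lemma~\ref{lemma.SRGs with v=4k-2 lambda-2 mu} would let you skip two of the three verifications is a valid (if unnecessary) shortcut.
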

As a means of gauging the significance of this corollary,
Table~\ref{table.SRGs from axial Steiner ETFs} lists the parameters of all $\SRG(v,k,\lambda,\mu)$ with $v\leq 1300$ that are constructed by it.
This arbitrary upper bound on $v$ is chosen to be consistent with~\cite{Brouwer15};
as an artifact of this bound, nearly all of the SRGs in this table arise from BIBDs with block size $k=2$, while two of these BIBDs have $k=3$, and one has $k=5$.
Two of these twelve SRGs seem to be new, namely an $\SRG(780,410,220,210)$ which arises from an $\RBIBD(39,3,1)$ known as a~\textit{Kirkman triple system}~\cite{RayW71},
and an $\SRG(540,294,168,150)$ which arises from a $\BIBD(45,5,1)$ that contains a parallel class,
such as the following explicit construction of $99$ subsets of $\set{1,\dotsc,45}$ given by Colin~Barker~\cite{Gordon15}:
\begin{equation*}
\begin{tiny}
\begin{array}{rrrrrrrrrrrrrrrrrrrrrrrrrrrrrrrrr}
 1& 1& 1& 1& 1& 1& 1& 1& 1& 1& 1& 2& 2& 2& 2& 2& 2& 2& 2& 2& 2& 3& 3& 3& 3& 3& 3& 3& 3& 3& 3& 4& 4\\
 2& 6& 7& 8& 9&10&13&14&16&17&22& 6& 7& 8& 9&10&14&15&17&18&23& 6& 7& 8& 9&10&11&15&18&19&24& 6& 7\\
 3&15&12&27&19&11&18&25&32&23&30&12&11&13&28&20&19&21&33&24&26&16&13&12&14&29&22&20&34&25&27&30&17\\
 4&29&20&33&21&24&26&34&38&28&40&25&30&16&34&22&27&35&39&29&36&23&21&26&17&35&31&28&40&30&37&31&24\\
 5&44&35&36&45&39&37&41&43&31&42&40&45&31&37&41&38&42&44&32&43&42&36&41&32&38&43&39&45&33&44&39&43\\\\
 4& 4& 4& 4& 4& 4& 4& 4& 5& 5& 5& 5& 5& 5& 5& 5& 5& 5& 6& 6& 6& 6& 6& 6& 7& 7& 7& 7& 7& 8& 8& 8& 8\\
 8& 9&10&11&12&19&20&25& 6& 7& 8& 9&10&12&13&16&20&21& 7&13&14&18&20&21&14&15&16&19&22&11&15&17&20\\
14&13&15&16&23&35&21&28&11&26&18&15&14&17&24&22&31&29& 8&17&24&22&27&33&18&25&28&23&34&21&19&29&24\\
22&27&18&29&32&36&26&38&19&32&25&23&28&30&33&27&37&39& 9&38&26&28&32&37&39&27&33&29&38&28&40&34&30\\
37&42&33&40&44&41&34&45&34&40&44&38&43&36&45&35&42&41&10&41&35&36&45&43&42&31&41&37&44&32&43&42&38\\\\
 8& 9& 9& 9& 9& 9&10&10&10&10&10&11&11&11&11&12&12&12&13&13&13&14&14&14&15&15&15&16&21&26&31&36&41\\
23&11&12&16&18&24&12&13&17&19&25&12&17&18&26&18&19&27&19&20&28&16&20&29&16&17&30&17&22&27&32&37&42\\
35&20&22&25&30&31&16&23&21&26&32&13&25&23&33&21&24&34&22&25&35&21&23&31&24&22&32&18&23&28&33&38&43\\
39&36&29&26&35&40&37&30&27&31&36&14&35&27&38&31&28&39&32&29&40&30&33&36&34&26&37&19&24&29&34&39&44\\
45&44&33&39&43&41&45&34&40&44&42&15&37&41&42&38&42&43&39&43&44&44&40&45&36&45&41&20&25&30&35&40&45
\end{array}
\end{tiny}\ .
\end{equation*}
Here, a parallel class is found by inspection: $\set{1,2,3,4,5}$, $\set{6,7,8,9,10}$, $\set{11,12,13,14,15}$, etc.
To be clear, the most important contribution here is not these two new SRGs themselves.
Rather, they simply serve to demonstrate the novelty of Corollary~\ref{corollary.SRGs from RBIBDs} in general.

\setlength{\arraycolsep}{10pt}
\begin{table}
\begin{equation*}
\begin{array}{llllllllll}
\multicolumn{4}{c}{\text{BIBD}}&\multicolumn{2}{c}{\text{ETF}}&\multicolumn{4}{c}{\text{SRG}}\\
\hline
\multicolumn{1}{|l}{v}&k&r&\multicolumn{1}{l|}{b}&m&\multicolumn{1}{l|}{n}&v&k&\lambda&\multicolumn{1}{l|}{\mu}\\
\hline
   4&   2&   3&   6&   6&  16&  16&  10&   6&   6\\
   8&   2&   7&  28&  28&  64&  64&  36&  20&  20\\
  12&   2&  11&  66&  66& 144& 144&  78&  42&  42\\
  16&   2&  15& 120& 120& 256& 256& 136&  72&  72\\
  20&   2&  19& 190& 190& 400& 400& 210& 110& 110\\
  24&   2&  23& 276& 276& 576& 576& 300& 156& 156\\
  28&   2&  27& 378& 378& 784& 784& 406& 210& 210\\
  32&   2&  31& 496& 496&1024&1024& 528& 272& 272\\
  36&   2&  35& 630& 630&1296&1296& 666& 342& 342\\
  15&   3&   7&  35&  35& 120& 120&  68&  40&  36\\
  39&   3&  19& 247& 247& 780& 780& 410& 220& 210\\
  45&   5&  11&  99&  99& 540& 540& 294& 168& 150
\end{array}
\end{equation*}
\caption{
The parameters of SRGs with at most $1300$ vertices that arise from axial real Steiner ETFs constructed from BIBDs that contain at least one parallel class.
These SRGs are constructed via the process summarized in Corollary~\ref{corollary.SRGs from RBIBDs}.
To be precise, the first four columns give the parameters of a $\BIBD(v,k,1)$ that contains at least one parallel class;
by Theorem~\ref{theorem.RBIBD implies axial}, each of these implies the existence of an axial real ETF whose dimensions are given by the second two columns;
by Theorem~\ref{theorem.real centroid-symmetric ETF implies SRG}, each of these in turn implies the existence of an SRG whose parameters are given in the final four columns.
Consulting~\cite{Brouwer15}, the SRGs whose parameters are given in the last two rows, namely $\SRG(780,410,220,210)$ and $\SRG(540,294,168,150)$, seem to be new.
Note this automatically implies the existence of their graph complements, namely $\SRG(780,369,168,180)$ and $\SRG(540,245,100,120)$.
More importantly, this for the first time gives a single construction that explains the existence of all $12$ of these SRGs.
}
\label{table.SRGs from axial Steiner ETFs}
\end{table}
\setlength{\arraycolsep}{2pt}

To apply Corollary~\ref{corollary.SRGs from RBIBDs} more generally,
note that in order for a $\BIBD(v,k,1)$ to exist and contain at least one parallel class we need both $r=\tfrac{v-1}{k-1}$ and $\tfrac vk$ to be integers.
Since $k$ and $k-1$ are relatively prime, the Chinese Remainder Theorem then implies that $v$ necessarily satisfies $v\equiv k\bmod k(k-1)$.
Though this necessary condition on $v$ is not sufficient in general, it is known to suffice in certain special cases of $k$, such as $k=2$ and $k=3$, as detailed below.
Moreover, this condition is always asymptotically sufficient.
In fact, for any $k\geq 2$, there exists a positive integer $v_0(k)$ such that for any $v\geq v_0(k)$ for which $v\equiv k\bmod k(k-1)$, there exists an $\RBIBD(v,k,1)$~\cite{RayW73}.
Note that in order to apply Corollary~\ref{corollary.SRGs from RBIBDs}, we also need for there to exist a real Hadamard matrix of size $r+1$;
writing $v=k(k-1)w+k$ for some $w\geq 1$, this means that $r+1=\frac{v-1}{k-1}+1=kw+2$ is necessarily divisible by $4$, which rules out all $k$ that are divisible by $4$.

For example, in the special case where $k=2$, in order to have a $\BIBD(v,2,1)$ that contains a parallel class we necessarily have that $v$ is even.
Moreover, when $v$ is even, such a BIBD exists.
In fact, when $v$ is even, the \textit{round-robin tournament schedule} is a famous example of an $\RBIBD(v,2,1)$.
In order to apply Corollary~\ref{corollary.SRGs from RBIBDs}, we also need that $r+1=v$ is divisible by $4$.
Specifically, writing $v=4u$ for some $u\geq1$ gives the following result:
if there exists a Hadamard matrix of size $4u$, then applying Corollary~\ref{corollary.SRGs from RBIBDs} to an $\RBIBD(4u,2,1)$ yields an SRG with parameters
\begin{equation*}
v=16u^2,
\quad
k=2u(4u+1),
\quad
\lambda=u(4u+2),
\quad
\mu=u(4u+2).
\end{equation*}
All the SRGs in Table~\ref{table.SRGs from axial Steiner ETFs} with $k=2$ are of this type.
These are not new: if there exists a real Hadamard matrix of size $4u$ then there exists a real symmetric Hadamard matrix with constant diagonal of size $16u^2$, which implies the existence of SRGs with these parameters~\cite{GoethalsS70,BrouwervL84}.

For a new family, consider Corollary~\ref{corollary.SRGs from RBIBDs} in the special case where $k=3$.
Here, we necessarily have that $v\equiv 3\bmod 6$.
Moreover, for any such $v$, there always exists an $\RBIBD(v,3,1)$ known as a Kirkman triple system~\cite{RayW71}.
Writing $v=6w+3$, we also need that there exists a real Hadamard matrix of size $3w+2$, meaning $w=4u-2$ for some $u\geq 1$.
Altogether, we have the following result: if there exists a real Hadamard matrix of size $4(3u-1)$, applying Corollary~\ref{corollary.SRGs from RBIBDs} to an $\RBIBD(24u-9,3,1)$ yields an SRG with parameters
\begin{equation*}
v=4(24u-9)(3u-1),
\quad
k=2(24u-7)(3u-1),
\quad
\lambda=(24u-4)(3u-1),
\quad
\mu=(24u-6)(3u-1).
\end{equation*}
Letting $u=1$ and $u=2$ gives the two instances of SRGs in Table~\ref{table.SRGs from axial Steiner ETFs} that stem from BIBDs with $k=3$.
Though the first of these may have already been known to exist, the second was not known, indicating this is a new family of SRGs.
Since we, for example, can take $u=\tfrac{2^{2j-1}+1}{3}$ for any $j\geq1$, we see that this new family of SRGs is infinite.

Yet another family arises from the finite geometry of affine lines in $\bbF_q^j$ for a given prime power $q$ and $j\geq 2$:
applying Corollary~\ref{corollary.SRGs from RBIBDs} to this example of an $\RBIBD(q^j,q,1)$, we see that if there exists a real Hadamard matrix of size $\tfrac{q^j-1}{q-1}+1$, then there exists an SRG with
\begin{equation*}
v=q^j(\tfrac{q^j-1}{q-1}+1),
\quad
k=\tfrac{q^j+q-1}2(\tfrac{q^j-1}{q-1}+1),
\quad
\lambda=\tfrac{q^j+3q-4}4(\tfrac{q^j-1}{q-1}+1),
\quad
\mu=\tfrac{q^j+q}4(\tfrac{q^j-1}{q-1}+1).
\end{equation*}
Here, the smallest example that does not have $q=2$ seems to be $q=5$, $j=3$ which yields an $\SRG(4000,2064,1088,1040)$.

Finally, for any prime power $q$ there exists an $\RBIBD(q^3+q^2+q+1,q+1,1)$~\cite{Lorimer74}.
As such, if there exists a Hadamard matrix of size $q^2+q+2$, there exists an SRG with parameters
\begin{align*}
v&=(q^3+q^2+q+1)(q^2+q+2),&
k&=\tfrac12(q^3+q^2+2q+1)(q^2+q+2),\\
\lambda&=\tfrac14(q^3+q^2+4q)(q^2+q+2),&
\mu&=\tfrac14(q^3+q^2+2q+2)(q^2+q+2).
\end{align*}
Letting $q=2$ gives an $\SRG(120,68,40,36)$ listed in Table~\ref{table.SRGs from axial Steiner ETFs}.
The next smallest example is when $q=5$ which yields an $\SRG(4992,2576,1360,1296)$.

We are aware of two other general families of RBIBDs, namely \textit{Denniston designs} and \textit{unitals}.
However, in those cases, the requisite real Hadamard matrices do not exist~\cite{FickusMT12}.
As such, in order to squeeze more SRGs out of Corollary~\ref{corollary.SRGs from RBIBDs}, one needs to investigate BIBDs that are not necessarily resolvable and yet contain at least one parallel class, such as the $\BIBD(45,5,1)$ we discussed earlier; we leave a more general investigation of this for future work.
For the remainder of this paper, we return to the general theory relating SRGs to real ETFs with centroidal symmetry.

\subsection{Constructing real ETFs with centroidal symmetry from SRGs}

We now provide a converse to Theorem~\ref{theorem.real centroid-symmetric ETF implies SRG}, namely a method for converting an SRG whose parameters satisfy $v=4k-2\lambda-2\mu$ into a real ETF with centroidal symmetry.
As we shall see, whether this ETF is centered or axial depends on the sign of the quantity $v-2k-1$.
This makes sense since a Naimark complement of a centroid-symmetric ETF has the opposite type of symmetry, cf.~Theorem~\ref{theorem.ETF operations};
in terms of their graphs, this corresponds to taking a graph complement, which simply changes the sign of this parameter:
\begin{equation*}
\tilde{v}-2\tilde{k}-1
=v-2(v-2k-1)-1
=-(v-2k-1).
\end{equation*}

\begin{theorem}
\label{theorem.SRG implies real centroid-symmetric ETF}
Let $\bfA$ be the adjacency matrix of an $\SRG(v,k,\lambda,\mu)$ with $0<k<v-1$.
There exists $\beta>0$ and a positive integer $m$ such that $\bfG=2\beta\bfA+(\beta+1)\bfI-\beta\bfJ$ is the Gram matrix of an ETF $\set{\bfphi_i}_{i=1}^{n}$ for $\bbR^m$ if and only if $v=4k-2\lambda-2\mu$.
Moreover, in this case $\beta$ and $m$ are unique:
if $v-2k-1>0$ then this ETF is centered and
\begin{equation*}
m=\tfrac{v(v-2k-1)^2}{(v-1)+(v-2k-1)^2},
\qquad
\alpha=\tfrac nm=\tfrac{v-1}{(v-2k-1)^2}+1,
\qquad
\beta=[\tfrac{n-m}{m(n-1)}]^{\frac12}=\tfrac1{v-2k-1};
\end{equation*}
if $v-2k-1<0$ then this ETF is axial and
\begin{equation*}
m=\tfrac{v(v-1)}{(v-1)+(v-2k-1)^2},
\qquad
\alpha=\tfrac nm=\tfrac{(v-2k-1)^2}{v-1}+1,
\qquad
\beta=[\tfrac{n-m}{m(n-1)}]^{\frac12}=-\tfrac{v-2k-1}{v-1}.
\end{equation*}
Moreover, this method of converting an SRG with $v=4k-2\lambda-2\mu$ to a centroid-symmetric ETF is the inverse of the method of converting such an ETF into such an SRG given in Theorem~\ref{theorem.real centroid-symmetric ETF implies SRG}.
\end{theorem}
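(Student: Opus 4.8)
The plan is to invoke the Gram-matrix characterization of Lemma~\ref{lemma.Gram matrix ETF characterization} applied to $\bfG=2\beta\bfA+(\beta+1)\bfI-\beta\bfJ$, and to show that for any $\beta>0$ the only substantive requirement is the idempotent-type relation $\bfG^2=\alpha\bfG$. First I would note that since $\bfA$ has zero diagonal and $\set{0,1}$-valued off-diagonal entries, the diagonal of $\bfG$ is identically $(\beta+1)-\beta=1$ while its off-diagonal entries equal $2\beta\bfA(i,j)-\beta\in\set{\beta,-\beta}$; hence conditions (ii) and (iii) of Lemma~\ref{lemma.Gram matrix ETF characterization} hold automatically and $\bfG$ is real symmetric. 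Thus the whole theorem reduces to determining exactly when there is a scalar $\alpha$ with $\bfG^2=\alpha\bfG$, and then reading off $\beta$, $\alpha$ and $m$.

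Next I would expand $\bfG^2$ using the SRG identity~\eqref{equation.definition of srg} together with $\bfA\bfJ=\bfJ\bfA=k\bfJ$ and $\bfJ^2=v\bfJ$, writing $\bfG^2$ as an explicit linear combination of $\bfA$, $\bfI$ and $\bfJ$. Because $0<k<v-1$ forces each vertex to have both a neighbor and a non-neighbor, $\bfA$ is not a combination of $\bfI$ and $\bfJ$, so $\set{\bfA,\bfI,\bfJ}$ is linearly independent and $\bfG^2=\alpha\bfG$ is equivalent to matching the three coefficients. This gives three scalar equations in $\alpha$ and $\beta$. Dividing by $\beta$ and eliminating $\alpha$ between the $\bfA$-equation and the $\bfJ$-equation collapses to the single relation $2(\lambda-\mu+1)=4k-4\mu-v+2$, i.e.\ $v=4k-2\lambda-2\mu$; this proves the condition is necessary, and conversely renders those two equations consistent, each giving $\alpha=2\beta(\lambda-\mu+1)+2$.

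It then remains to impose the $\bfI$-equation. Substituting this expression for $\alpha$ and simplifying the coefficient of $\beta^2$ via $v=4k-2\lambda-2\mu$, I expect to reach a quadratic of the form $(v-1)\beta^2-2(\lambda-\mu+1)\beta-1=0$. Its roots have product $-\tfrac1{v-1}<0$, so exactly one root is positive; this immediately yields uniqueness of $\beta>0$, and with $m=\tfrac{\Tr(\bfG)}{\alpha}=\tfrac v\alpha$, uniqueness of $m$. Simplifying the two roots using~\eqref{equation.srg parameter relation} and the $\mu$-formula of Lemma~\ref{lemma.SRGs with v=4k-2 lambda-2 mu} should reduce them to $\beta=\tfrac1{v-2k-1}$ and $\beta=-\tfrac{v-2k-1}{v-1}$. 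Since $v$ is even when $v=4k-2\lambda-2\mu$, the quantity $v-2k-1$ is odd and hence nonzero, and its sign selects precisely one root as positive, producing the two tabulated regimes. To classify each, I would compute $\bfG\bfone=(1-\beta(v-2k-1))\bfone$: the root $\tfrac1{v-2k-1}$ gives $\bfG\bfone=\bfzero$ (centered, by Definition~\ref{definition.centered and axial}), while the other gives $\bfG\bfone=\alpha\bfone$ (axial). Integrality of $m$ is then automatic: since $\bfG$ is an affine function of the commuting matrices $\bfA$, $\bfI$, $\bfJ$, its $\alpha$-eigenspace is a sum of eigenspaces of $\bfA$, so $m$ is a genuine eigenvalue multiplicity, a positive integer with $0<m<v$.

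Finally, for the inverse claim, I would observe that solving $\bfG=2\beta\bfA+(\beta+1)\bfI-\beta\bfJ$ for $\bfA$ returns exactly the affine map $\bfA=\tfrac1{2\beta}\bfG-\tfrac{\beta+1}{2\beta}\bfI+\tfrac12\bfJ$ of Theorem~\ref{theorem.real centroid-symmetric ETF implies SRG}, so the two constructions are literal algebraic inverses once the values of $\beta$ agree. They do, because in each direction $\beta$ is pinned to the common off-diagonal modulus of an ETF Gram matrix, which by Theorem~\ref{theorem.Welch bound} equals the Welch value $[\tfrac{n-m}{m(n-1)}]^{1/2}$ determined by $n=v$ and $m=\tfrac v\alpha$; a brief check that the computed $\beta$ matches this value (equivalently, noting it must, since $\bfG$ is a bona fide ETF Gram matrix) closes the loop in both orders of composition. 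The main obstacle I anticipate is the bookkeeping in the third paragraph: expanding $\bfG^2$, matching coefficients, and then massaging the quadratic's coefficients through the SRG parameter relations into the clean closed forms, all while correctly tracking which root is positive and hence which symmetry type arises.
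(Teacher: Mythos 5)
Your proposal is correct and follows essentially the same route as the paper: reduce to $\bfG^2=\alpha\bfG$ via Lemma~\ref{lemma.Gram matrix ETF characterization}, expand using the SRG identity, match coefficients of the linearly independent $\bfA,\bfI,\bfJ$ to force $v=4k-2\lambda-2\mu$, and solve the same quadratic $(v-1)\beta^2-(4k-4\mu-v+2)\beta-1=0$ for the unique positive $\beta$, with $m=\Tr(\bfG)/\alpha$. Your product-of-roots uniqueness argument and the explicit check $\bfG\bfone=(1-\beta(v-2k-1))\bfone$ to classify the centered versus axial cases are small, welcome refinements of steps the paper handles via the quadratic formula or leaves implicit.
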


\begin{proof}
For any $\beta>0$, the matrix $\bfG=2\beta\bfA+(\beta+1)\bfI-\beta\bfJ$ is self-adjoint, has ones along its diagonal, and has off-diagonal entries of $\pm\beta$.
As such, Lemma~\ref{lemma.Gram matrix ETF characterization} gives that $\bfG$ is the Gram matrix of an ETF precisely when there exists $\alpha\in\bbR$ such that $\bfG^2=\alpha\bfG$.
Here, since $\bfA$ is the adjacency matrix of an $\SRG(v,k,\lambda,\mu)$ we have $\bfA\bfone=k\bfone$ and so $\bfA\bfJ=k\bfJ=\bfJ\bfA$ and moreover that $\bfA^2$ satisfies~\eqref{equation.definition of srg}.
As such,
\begin{align*}
\bfG^2
&=[2\beta\bfA+(\beta+1)\bfI-\beta\bfJ]^2\\
&=4\beta^2\bfA^2+(\beta+1)^2\bfI+\beta^2 v\bfJ+4\beta(\beta+1)\bfA-2\beta(\beta+1)\bfJ-4\beta^2 k\bfJ\\
&=4\beta^2[(\lambda-\mu)\bfA+(k-\mu)\bfI+\mu\bfJ]+4\beta(\beta+1)\bfA+(\beta+1)^2\bfI+[\beta^2 v-2\beta(\beta+1)-4\beta^2 k]\bfJ\\
&=2\beta[\beta(2\lambda-2\mu+2)+2]\bfA+[4\beta^2(k-\mu)+(\beta+1)^2]\bfI-\beta[\beta(4k-4\mu-v+2)+2]\bfJ.
\end{align*}
Since $0<k<v-1$, $\bfA$ is neither the complete graph nor its complement, making $\bfA$ linearly independent from $\bfI$ and $\bfJ$.
Thus, there exists $\alpha\in\bbR$ such that $\bfG^2=\alpha\bfG=2\beta\alpha\bfA+(\beta+1)\alpha\bfI-\beta\alpha\bfJ$ if and only if their coefficients of $\bfA$, $\bfI$ and $\bfJ$ are equal, that is, if and only if $\alpha$ satisfies the following three equations:
\begin{align}
\label{equation.proof of SRG gives CS ETF 1}
\alpha&=\beta(2\lambda-2\mu+2)+2,\\
\label{equation.proof of SRG gives CS ETF 2}
(\beta+1)\alpha&=4\beta^2(k-\mu)+(\beta+1)^2,\\
\label{equation.proof of SRG gives CS ETF 3}
\alpha&=\beta(4k-4\mu-v+2)+2.
\end{align}

In particular, if $\bfG=2\beta\bfA+(\beta+1)\bfI-\beta\bfJ$ is the Gram matrix of some ETF for some $\beta>0$ then both~\eqref{equation.proof of SRG gives CS ETF 1} and~\eqref{equation.proof of SRG gives CS ETF 3} hold, implying $2\lambda-2\mu=4k-4\mu-v$ and so $v=4k-2\lambda-2\mu$.
Conversely, if $v=4k-2\lambda-2\mu$, we claim $\bfG$ is the Gram matrix of some ETF for some $\beta>0$.
To see this, note the quadratic equation
\begin{equation*}
(\beta+1)[\beta(4k-4\mu-v+2)+2]=4\beta^2(k-\mu)+(\beta+1)^2,
\end{equation*}
is equivalent to the equation
\begin{equation}
\label{equation.proof of SRG gives CS ETF 5}
(v-1)\beta^2-(4k-4\mu-v+2)\beta-1=0,
\end{equation}
which has a unique positive solution by the quadratic formula.
Taking $\alpha=\beta(4k-4\mu-v+2)+2$ where $\beta$ is this unique positive root, $\alpha$ and $\beta$ immediately satisfy~\eqref{equation.proof of SRG gives CS ETF 2} and~\eqref{equation.proof of SRG gives CS ETF 3}.
Moreover, combining~\eqref{equation.proof of SRG gives CS ETF 3} and our assumption that $v=4k-2\lambda-2\mu$ gives~\eqref{equation.proof of SRG gives CS ETF 1}.

Having proven the ``if and only if" statement of the result, we from this point forward assume $v=4k-2\lambda-2\mu$ and focus on deriving explicit formulas for $m$ and $\beta$.
Note that in order for our positive parameter $\beta$ to satisfy both~\eqref{equation.proof of SRG gives CS ETF 2} and~\eqref{equation.proof of SRG gives CS ETF 3} it must be the unique positive root of~\eqref{equation.proof of SRG gives CS ETF 5}.
To obtain a more explicit expression for $\beta$, we use Lemma~\ref{lemma.SRGs with v=4k-2 lambda-2 mu} to simplify the linear coefficient of this polynomial:
\begin{equation}
\label{equation.proof of SRG gives CS ETF 6}
4k-4\mu-v+2
=4k-2k\tfrac{v-2k-2}{v-2k-1}-v+2
=\tfrac{v-1}{v-2k-1}-(v-2k-1).
\end{equation}
That is,~\eqref{equation.proof of SRG gives CS ETF 5} is $(v-1)\beta^2-\bigbracket{\tfrac{v-1}{v-2k-1}-(v-2k-1)}\,\beta-1=0$, meaning its positive root $\beta$ is
\begin{align*}
\beta
&=\tfrac1{2(v-1)}\Bigset{\bigbracket{\tfrac{v-1}{v-2k-1}-(v-2k-1)}+\Bigset{\bigbracket{\tfrac{v-1}{v-2k-1}-(v-2k-1)}^2+4(v-1)}^{\frac12}}\\
&=\tfrac1{2(v-1)}\Bigset{\bigbracket{\tfrac{v-1}{v-2k-1}-(v-2k-1)}+\bigabs{\tfrac{v-1}{v-2k-1}+(v-2k-1)}}.
\end{align*}
Take care to note the absolute value above;
as we noted earlier, the value of $v-2k-1$ can be either positive or negative for a given SRG,
since this value is negated by taking graph complements.
As such, we consider these two cases in order to further simplify this expression for $\beta$:
\begin{equation*}
\beta=\left\{\begin{array}{rl}\frac1{v-2k-1},&\ v-2k-1>0,\smallskip\\-\frac{v-2k-1}{v-1},&\ v-2k-1<0.\end{array}\right.
\end{equation*}
Recalling $\alpha=\beta(4k-4\mu-v+2)+2$ and~\eqref{equation.proof of SRG gives CS ETF 6} then gives
\begin{equation*}
\alpha
=\beta[\tfrac{v-1}{v-2k-1}-(v-2k-1)]+2
=\left\{\begin{array}{rl}\tfrac{v-1}{(v-2k-1)^2}+1,&\ v-2k-1>0,\smallskip\\\tfrac{(v-2k-1)^2}{v-1}+1,&\ v-2k-1<0.\end{array}\right.
\end{equation*}
In particular, as we discussed after Lemma~\ref{lemma.Gram matrix ETF characterization}, $\bfG$ is the Gram matrix of an ETF for $\bbR^m$ where $m$ is the multiplicity of $\alpha$ as an eigenvalue of $\bfG$, namely
\begin{equation*}
m
=\tfrac{\Tr(\bfG)}{\alpha}
=\tfrac{n}{\alpha}
=\tfrac{v}{\alpha}
=\left\{\begin{array}{rl}\tfrac{v(v-2k-1)^2}{(v-1)+(v-2k-1)^2},&\ v-2k-1>0,\smallskip\\\tfrac{v(v-1)}{(v-1)+(v-2k-1)^2},&\ v-2k-1<0.\end{array}\right.
\end{equation*}
Note that in this case, $\bfG$ is the Gram matrix of an $m\times n$ ETF and so we also necessarily have $\alpha=\tfrac nm$ and \smash{$\beta=[\tfrac{n-m}{m(n-1)}]^{\frac12}$} for this value of $m$.

For the final conclusion, note that applying Theorem~\ref{theorem.real centroid-symmetric ETF implies SRG} to a centroid-symmetric ETF $\set{\bfphi_i}_{i=1}^{n}$ for $\bbR^m$ produces the adjacency matrix $\bfA=\tfrac1{2\beta_0}\bfPhi^*\bfPhi-\tfrac{\beta_0+1}{2\beta_0}\bfI+\tfrac12\bfJ$ of an SRG with $v=4k-2\lambda-2\mu$.
Here, \smash{$\beta_0=[\tfrac{n-m}{m(n-1)}]^{\frac12}$} is the Welch bound of this ETF.
Applying our method here to $\bfA$ then gives a unique $\beta$ and $m$ for which $\bfG=2\beta\bfA+(\beta+1)\bfI-\beta\bfJ$ is the Gram matrix of an ETF.
Since choosing $\beta=\beta_0$ gives $\bfG=\bfPhi^*\bfPhi$ which is the Gram matrix of an ETF, we know our method here recovers $\bfPhi^*\bfPhi$.
Conversely,
if $\bfA$ is the adjacency matrix of any SRG with $v=4k-2\lambda-2\mu$, we know our approach here produces a unique $\beta$ and $m$ for which $\bfG=2\beta\bfA+(\beta+1)\bfI-\beta\bfJ$ is the Gram matrix of an ETF for $\bbR^m$.
Since this $\beta$ is necessarily the Welch bound of this ETF, applying Theorem~\ref{theorem.real centroid-symmetric ETF implies SRG} to it recovers $\bfA$.
\end{proof}

Together, Theorems~\ref{theorem.real centroid-symmetric ETF implies SRG} and~\ref{theorem.SRG implies real centroid-symmetric ETF} establish an equivalence between $m\times n$ real ETFs with centroidal symmetry and SRGs with $n=v=4k-2\lambda-2\mu$.
Moreover, combining Theorem~\ref{theorem.SRG implies real centroid-symmetric ETF} with the previously known equivalence between real ETFs and SRGs on $v=n-1$ vertices with $k=2\mu$ gives the following result:
\begin{corollary}
\label{corollary.SRG on n implies SRG on n-1}
If there exists an $\SRG(v,k,\lambda,\mu)$ with $v=4k-2\lambda-2\mu$ then there exists an
\begin{equation*}
\SRG(v-1,\ k\tfrac{v-2k}{v-2k-1},\ \tfrac{3k-v}{2}+\tfrac{3k}{2(v-2k-1)},\ \tfrac{k}2\tfrac{v-2k}{v-2k-1}).
\end{equation*}
\end{corollary}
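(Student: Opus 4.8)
The plan is to chain the two ETF--SRG equivalences that have already been established. Starting from an $\SRG(v,k,\lambda,\mu)$ with $v=4k-2\lambda-2\mu$ and (as we may assume, the trivial empty and complete graphs aside) $0<k<v-1$, I would first invoke Theorem~\ref{theorem.SRG implies real centroid-symmetric ETF} to manufacture a centroid-symmetric real ETF $\set{\bfphi_i}_{i=1}^{n}$ for $\bbR^m$ on exactly $n=v$ vectors; that theorem simultaneously hands us the explicit values of $\alpha=\tfrac nm$ and $\beta$. Since $\alpha>1$ forces $m<n$, this ETF is eligible for the traditional equivalence Theorem~\ref{theorem.traditional ETF SRG equivalence}, which converts any $m\times n$ real ETF into an $\SRG(n-1,k_0,\lambda_0,\mu_0)$ with $\mu_0=\tfrac{k_0}{2}$. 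Because $n=v$, this output SRG automatically lives on $v-1$ vertices, matching the claimed first parameter.

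The remaining work is to verify the other three parameters by substitution. By~\eqref{equation.traditional ETF SRG equivalence 1} the degree is $k_0=\tfrac n2-1+\tfrac{\alpha-2}{2\beta}$; plugging in $n=v$ together with the formulas for $\alpha$ and $\beta$ from Theorem~\ref{theorem.SRG implies real centroid-symmetric ETF} should reduce $\tfrac{\alpha-2}{2\beta}$ to $\tfrac{(v-1)-(v-2k-1)^2}{2(v-2k-1)}$. The pleasant feature is that this simplification yields the \emph{same} expression in both the $v-2k-1>0$ (centered) and $v-2k-1<0$ (axial) cases, so no case split is needed; routine algebra then collapses $k_0$ to $k\tfrac{v-2k}{v-2k-1}$. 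The final two parameters come for free: $\mu_0=\tfrac{k_0}{2}=\tfrac k2\tfrac{v-2k}{v-2k-1}$, and, using the identity $\lambda_0=\tfrac{3k_0-(v-1)-1}{2}$ valid for any SRG with $k_0=2\mu_0$ (the second remark following Theorem~\ref{theorem.traditional ETF SRG equivalence}), one obtains $\lambda_0=\tfrac{3k-v}{2}+\tfrac{3k}{2(v-2k-1)}$.

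The argument is conceptually short because both heavy-lifting steps are already proved, so the only genuine obstacle is bookkeeping. One must confirm that the hypotheses of Theorem~\ref{theorem.SRG implies real centroid-symmetric ETF} hold---in particular $v-2k-1\neq 0$, which is automatic since $v=4k-2\lambda-2\mu$ forces $v$ to be even whereas $2k+1$ is odd---and that $m<n$ so that Theorem~\ref{theorem.traditional ETF SRG equivalence} applies. One must then carry the sign of $v-2k-1$ correctly through the absolute-value-laden formulas for $\alpha$ and $\beta$ supplied by Theorem~\ref{theorem.SRG implies real centroid-symmetric ETF}. The key simplification---that the centered and axial cases produce an identical expression for $\tfrac{\alpha-2}{2\beta}$---is precisely what makes the final computation clean and case-free.
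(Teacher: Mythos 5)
Your proposal is correct and follows the same route as the paper's proof: feed the SRG through Theorem~\ref{theorem.SRG implies real centroid-symmetric ETF} to get a centroid-symmetric ETF on $n=v$ vectors, then push that ETF through Theorem~\ref{theorem.traditional ETF SRG equivalence} to land on an SRG with $v_0=v-1$ and $\mu_0=\tfrac{k_0}{2}$, computing $k_0$ and deducing $\lambda_0$ from the $k_0=2\mu_0$ identity. The only (cosmetic) difference is that the paper carries out the $k_0$ computation separately in the centered ($v-2k-1>0$) and axial ($v-2k-1<0$) cases, whereas you correctly observe that $\tfrac{\alpha-2}{2\beta}=\tfrac{(v-1)-(v-2k-1)^2}{2(v-2k-1)}$ in both cases, which lets you avoid the case split entirely.
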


\begin{proof}
If $v-2k-1>0$ then applying Theorem~\ref{theorem.traditional ETF SRG equivalence} to the centered ETF produced by Theorem~\ref{theorem.SRG implies real centroid-symmetric ETF} gives an $\SRG(v_0,k_0,\lambda_0,\mu_0)$ where $v_0=n-1=v-1$ and $\mu_0=\tfrac{k_0}2$ where
\begin{equation*}
k_0
=\tfrac n2-1+\tfrac{\alpha-2}{2\beta}
=\tfrac v2-1+\tfrac12\bigbracket{\tfrac{v-1}{(v-2k-1)^2}-1}(v-2k-1)
=k\tfrac{v-2k}{v-2k-1}.
\end{equation*}
If on the other hand $v-2k-1<0$ then applying Theorem~\ref{theorem.traditional ETF SRG equivalence} to the axial ETF produced by Theorem~\ref{theorem.SRG implies real centroid-symmetric ETF} gives an $\SRG(v_0,k_0,\lambda_0,\mu_0)$ where $v_0=n-1=v-1$ and $\mu_0=\tfrac{k_0}2$ where
\begin{equation*}
k_0
=\tfrac n2-1+\tfrac{\alpha-2}{2\beta}
=\tfrac v2-1-\tfrac12\bigbracket{\tfrac{(v-2k-1)^2}{v-1}-1}\tfrac{v-1}{v-2k-1}
=k\tfrac{v-2k}{v-2k-1}.
\end{equation*}
That is, in either case we get the same value of $v_0$ and $k_0$ and moreover that $\mu_0=\tfrac{k_0}2$ which in turn implies $\lambda_0=\tfrac{3k_0-v_0-1}{2}=\tfrac{3k-v}{2}+\tfrac{3k}{2(v-2k-1)}$.
\end{proof}

Looking at~\cite{Brouwer15}, this result appears to be new.
For example, it is known~\cite{Makhnev02,BannaiMV05} that there does not exist an $\SRG(1127,640,396,320)$ or an $\SRG(1127,486,165,243)$.
By the previously known equivalence between real ETFs and SRGs with $\mu=\tfrac k2$ given in Theorem~\ref{theorem.traditional ETF SRG equivalence},
this equivalently means that neither $47\times 1128$ nor $1081\times 1128$ real ETFs exist.
Of course, this means that neither axial $47\times 1128$ nor centered $1081\times 1128$ real ETFs exist.
By Theorems~\ref{theorem.real centroid-symmetric ETF implies SRG} and~\ref{theorem.SRG implies real centroid-symmetric ETF}, this equivalently means that $\SRG(1128,644,400,324)$ and $\SRG(1128,483,162,240)$ do not exist, a fact which can be directly proven using the Krein bound and the absolute bound~\cite{Brouwer15}.
But this same fact also means that neither centered $47\times 1128$ nor axial $1081\times 1128$ real ETFs exist.
Equivalently, we have that $\SRG(1128,560,316,240)$ and $\SRG(1128,567,246,324)$ do not exist, an observation that does not seem to have been made before~\cite{Brouwer15}.
This argument is summarized in Corollary~\ref{corollary.SRG on n implies SRG on n-1}:
if an $\SRG(1128,560,316,240)$ did exist then this corollary tells us there exists an $\SRG(1127,640,396,320)$, a contradiction of the results of~\cite{Makhnev02,BannaiMV05}.

Along these lines, subsequent to the initial posting of this work to arXiv.org,
the existence of SRGs with parameters $(75,32,10,16)$ have been ruled out using computer-assisted arguments~\cite{AzarijaM15}.
As detailed in~\cite{Yu15}, this means $19\times 76$ real ETFs do not exist,
and moreover Corollary~\ref{corollary.SRG on n implies SRG on n-1} implies SRGs with parameters $(76,30,8,14)$, $(76,45,28,24)$, $(76,35,18,14)$ and $(76,40,18,24)$ do not exist.
Even more recently, similar techniques have been used to show that SRGs with parameters $(95,40,12,20)$ do not exist~\cite{AzarijaM16}, thereby disproving the existence of $20\times 96$ real ETFs as well as SRGs with parameters $(96,38,10,18)$, $(96,57,36,30)$, $(96,45,24,18)$ and $(96,50,22,30)$.	

Applying other combinations of Theorems~\ref{theorem.traditional ETF SRG equivalence}, \ref{theorem.real centroid-symmetric ETF implies SRG} and~\ref{theorem.SRG implies real centroid-symmetric ETF} to the tables of~\cite{Brouwer07,Brouwer15} give other valuable insights into the nature of certain ETFs.
For example, since both $\SRG(16,6,2,2)$ and $\SRG(16,10,6,6)$ exist we know that there exist some $6\times 16$ real ETFs that are centered and that there exists others of this same size that are axial;
this also follows from applying~Theorem~\ref{theorem.difference set implies centroidal} to a $6$-element difference set in $\bbZ_2^4$, and is an instance of~\eqref{equation.real McFarland}.
For another example, note that though the problem of the existence of a $20\times 96$ real ETF remains open, the nonexistence of $\SRG(96,57,36,30)$ tells us that any such ETF cannot be axial.
Meanwhile, the existence of $\SRG(276,135,78,54)$ coupled with the nonexistence of an $\SRG(276,165,108,84)$ tells us that centered $27\times 276$ real ETFs exist but axial ones of this same size do not.
This is similar to the $7\times 28$ case we discussed earlier.
In other cases, the SRG tables tell us that real centered ETFs of a given size exist but that the existence of axial real ETFs of that same size remains open,
such as when $m=77$ and $n=210$.
In still others, such as the case of $19\times 76$ real ETFs, the tables tell us nothing.

Note these observations have a common theme: though the question of the existence of centered $m\times n$ real ETFs is equivalent to that of axial $(n-m)\times n$ real ETFs via Naimark complements---see Theorem~\ref{theorem.ETF operations}---it seems to be independent of the question of the existence of axial $m\times n$ real ETFs.
When both centered and axial real ETFs of the same size exist, such as when $m=6$ and $n=16$, we refer to the corresponding SRGs as \textit{centroidal complements}.
Note not every SRG with $v=4k-2\lambda-2\mu$ has a centroidal complement.
For example, $\SRG(28,12,6,4)$ exist and correspond to centered $7\times 28$ real ETFs whereas $\SRG(28,18,12,10)$, which correspond to axial $7\times 28$ real ETFs, do not.
As such, the following result, which allows one to explicitly compute the necessary parameters of a potential centroidal complement, is the strongest fact we could derive about such complements at this time:

\begin{theorem}
\label{theorem.centroidal complements}
$\SRG(v,k,\lambda,\mu)$ and $\SRG(\hat{v},\hat{k},\hat{\lambda},\hat{\mu})$ with $v=4k-2\lambda-2\mu$ and $\hat{v}=4\hat{k}-2\hat{\lambda}-2\hat{\mu}$ are centroidal complements if and only if
\begin{equation*}
-(v-2k-1)(\hat{v}-2\hat{k}-1)=v-1=\hat{v}-1.
\end{equation*}
\end{theorem}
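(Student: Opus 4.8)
The plan is to reduce the entire statement to the explicit size formulas of Theorem~\ref{theorem.SRG implies real centroid-symmetric ETF}. By the equivalence established there together with Theorem~\ref{theorem.real centroid-symmetric ETF implies SRG}, any SRG with $v=4k-2\lambda-2\mu$ corresponds to a real centroid-symmetric ETF of a size $m\times n$ that is uniquely determined by the SRG parameters, with $n=v$; moreover this ETF is centered when $v-2k-1>0$ and axial when $v-2k-1<0$. Since $v=4k-2\lambda-2\mu$ forces $v$ to be even, the odd integer $v-2k-1$ is never zero, so exactly one of these two cases always occurs. By the definition of centroidal complements, the two SRGs in the statement are centroidal complements precisely when their associated ETFs share the same dimensions $m\times n$ but have opposite symmetry types.

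First I would translate this condition into numerical equations. ``Opposite symmetry types'' means that one SRG is centered and the other axial, so that $v-2k-1$ and $\hat v-2\hat k-1$ have opposite signs; ``same dimensions'' means $n=\hat n$ and $m=\hat m$. Since $n=v$ and $\hat n=\hat v$, the first of these reads $v=\hat v$, equivalently $v-1=\hat v-1$, which already accounts for half of the claimed identity. The remaining task is to show that, under $v=\hat v$, the equality $m=\hat m$ is equivalent to $-(v-2k-1)(\hat v-2\hat k-1)=v-1$.

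To do this I would assume without loss of generality that the first SRG is centered, writing $p=v-2k-1>0$, and the second axial, writing $q=\hat v-2\hat k-1<0$. Substituting the centered formula $m=\frac{v p^{2}}{(v-1)+p^{2}}$ and the axial formula $\hat m=\frac{v(v-1)}{(v-1)+q^{2}}$ from Theorem~\ref{theorem.SRG implies real centroid-symmetric ETF}, cancelling the common factor $v$, and cross-multiplying collapses $m=\hat m$ to $p^{2}q^{2}=(v-1)^{2}$. Because $p>0>q$, the correct square root is $-pq=v-1$, which is the asserted relation. The converse direction simply reverses these steps: from $-(v-2k-1)(\hat v-2\hat k-1)=v-1=\hat v-1$ one reads off $v=\hat v$, the positivity of the product (recall $v-1>0$) forces opposite signs and hence the opposite-type structure needed to invoke the two distinct size formulas, and squaring back recovers $m=\hat m$.

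I expect the only genuine obstacle to be bookkeeping rather than ideas: one must keep straight which of the two distinct formulas for $m$ applies to which SRG, and must invoke the parity of $v$ to exclude the degenerate case $v-2k-1=0$ so that the centered/axial dichotomy is exhaustive and the sign of $pq$ is unambiguous. The algebra itself is routine, since the centered and axial size formulas share the denominator $(v-1)+(\,\cdot\,)^{2}$ and differ in their numerators only by interchanging $(v-1)$ with the squared parameter, which is exactly what makes $m=\hat m$ reduce to the clean product relation $p^{2}q^{2}=(v-1)^{2}$.
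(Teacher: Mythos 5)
Your proposal is correct and follows essentially the same route as the paper: both directions reduce to the explicit parameter formulas from the equivalence in Theorems~\ref{theorem.real centroid-symmetric ETF implies SRG} and~\ref{theorem.SRG implies real centroid-symmetric ETF}, using the parity of $v$ to get the centered/axial dichotomy and the sign of $v-2k-1$ to pick the right branch. The only cosmetic difference is that the paper checks the forward implication via the formulas for $k$ and $\hat{k}$ in terms of $\alpha$ and $\beta$ (computing $-(v-2k-1)(\hat{v}-2\hat{k}-1)=\frac{\alpha-1}{\beta^2}=n-1$ directly), whereas you run the two $m$-formulas in both directions and reduce to $p^2q^2=(v-1)^2$; the algebra is equivalent.
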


\begin{proof}
First assume these two SRGs are centroidal complements,
namely that in accordance with Theorem~\ref{theorem.real centroid-symmetric ETF implies SRG} and without loss of generality,
$\SRG(v,k,\lambda,\mu)$ arises from an $n$-vector centered ETF in $\bbR^m$ for some $m$,
while $\SRG(\hat{v},\hat{k},\hat{\lambda},\hat{\mu})$ arises from an $n$-vector axial ETF for this same space.
We thus have $v=n=\hat{v}$, $k=\tfrac{n-1}{2}-\tfrac{1}{2\beta}$ and $\hat{k}=\tfrac{n-1}{2}+\tfrac{\alpha-1}{2\beta}$ where \smash{$\alpha=\tfrac nm$} and \smash{$\beta=[\tfrac{n-m}{m(n-1)}]^{\frac12}$}.
As such,
\begin{equation*}
-(v-2k-1)(\hat{v}-2\hat{k}-1)
=-[n-(n-1)+\tfrac{1}{\beta}-1][n-(n-1)-\tfrac{\alpha-1}{\beta}-1]
=\tfrac{\alpha-1}{\beta^2}
=n-1,
\end{equation*}
as claimed.
Conversely, suppose the parameters of these two SRGs satisfy $-(v-2k-1)(\hat{v}-2\hat{k}-1)=v-1=\hat{v}-1$.
As such, $v=\hat{v}$ and one of the quantities $v-2k-1$ and $\hat{v}-2\hat{k}-1$ is positive while the other is negative;
without loss of generality, $v-2k-1>0$.
Applying Theorem~\ref{theorem.SRG implies real centroid-symmetric ETF} to these SRGs then produces a centered ETF which spans a space of dimension  $m=\tfrac{v(v-2k-1)^2}{(v-1)+(v-2k-1)^2}$ as well as an axial ETF of this same dimension:
\begin{equation*}
\hat{m}
=\tfrac{\hat{v}(\hat{v}-1)}{(\hat{v}-1)+(\hat{v}-2\hat{k}-1)^2}
=\tfrac{v(v-1)}{(v-1)+\frac{(v-1)^2}{(v-2k-1)^2}}
=\tfrac{v(v-2k-1)^2}{(v-2k-1)^2+(v-1)}
=m.\qedhere
\end{equation*}
\end{proof}

Note that since taking a graph complement of an SRG with $v=4k-2\lambda-2\mu$ results in another graph of this type but negates the quantity $v-2k-1$,
Theorem~\ref{theorem.centroidal complements} immediately implies that if two SRGs are centroidal complements then their graph complements are as well.
More generally, Theorems~\ref{theorem.SRG implies real centroid-symmetric ETF} and~\ref{theorem.centroidal complements} suggest that SRGs with $v=4k-2\lambda-2\mu$ are probably best parameterized by $c=v-1$ and $d=v-2k-1$ as opposed to $v$ and $k$ themselves.
For example, centered $6\times16$ real ETFs are equivalent to $\SRG(16,6,2,2)$ which have $(c,d)=(15,3)$.
Negating $d$ corresponds to taking a graph complement;
in our example, we have $(\tilde{c},\tilde{d})=(c,-d)=(15,-3)$ which equates to $\SRG(16,9,4,6)$, i.e.\ axial $10\times 16$ real ETFs.
Meanwhile, negating the reciprocal of $d$ corresponds to taking a centroidal complement, e.g.\ $(\hat{c},\hat{d})=(c,-\tfrac{c}{d})=(15,-5)$ which equates to $\SRG(16,10,6,6)$, i.e.\ axial $6\times 16$ real ETFs.
These two operations commute and composing them gives, for example, \smash{$(\hat{\tilde{c}},\hat{\tilde{d}})=(\tilde{\hat{c}},\tilde{\hat{d}})=(c,\tfrac{c}{d})=(15,5)$} which equates to $\SRG(16,5,0,2)$,  i.e.\ centered $10\times 16$ real ETFs.

We conclude this paper with a few remarks on integrality conditions on the existence of real ETFs with centroidal symmetry.
One of the main results of~\cite{SustikTDH07} states that if an $m\times n$ real ETF exists with $1<m<n-1$ and $m\neq 2n$ then both \smash{$\tfrac1{\beta}=[\tfrac{m(n-1)}{n-m}]^{\frac12}$} and \smash{$\tfrac1{\tilde{\beta}}=[\tfrac{(n-m)(n-1)}{m}]^{\frac12}$} are necessarily odd integers.
We have not been able to strengthen these conditions into provably stricter ones on the existence of real ETFs with centroidal symmetry,
and leave a deeper investigation of this problem as future work.

We did however make some minor progress along these lines in the special case where $n=2m$.
There, \cite{SustikTDH07} gives that $m$ is necessarily odd and that $2m-1$ is necessarily the sum of two squares.
We claim that if an $m\times 2m$ real ETF has centroidal symmetry then $2m-1$ is necessarily a perfect square.
Indeed, for any such ETF, the Welch bound $(\frac1{2m-1})^{\frac12}$ and its negative occur a total number of $2m-1$ times in each row of the Gram matrix $\bfPhi^*\bfPhi$;
since the only eigenvalues of $\bfG$ are the rational numbers $0$ and $\alpha=\tfrac nm$, having $\bfone$ be an eigenvector of $\bfPhi^*\bfPhi$ thus forces $\beta$ to be rational as well.
This means, for example, that the famous $3\times 6$ real ETF constructed by identifying the antipodal points in the icosahedron is neither centered nor axial.
Meanwhile, combining Theorem~\ref{theorem.SRG implies real centroid-symmetric ETF} with~\cite{Brouwer07,Brouwer15} gives that there indeed exist centered $5\times 10$ real ETFs as well as axial $5\times 10$ real ETFs.

Remarkably, with the exception of real ETFs with $n=2m$, there seems to be no examples of real ETFs from~\cite{Brouwer15} in which we know there exists an $m\times n$ real ETF and also know that such an ETF can neither be centered nor axial.
This leads us to the following open problem: if there exists an $m\times n$ real ETF with $n\neq 2m$, then can we always either sign it to make it centered or sign it to make it axial?
That is, does a $\set{\pm1}$-valued vector always lie in the union of the null space and row space of the synthesis operator of a non-redundancy-two real ETF?
It is already known that non-equiangular two-distance tight frames have this property, cf.\ Theorem~2.4 of~\cite{BargGOY15}.
We leave a deeper investigation of this question for future work.

\section*{Acknowledgments}

This work was partially supported by NSF DMS 1321779, AFOSR F4FGA05076J002 and an AFOSR Young Investigator Research Program award.
The views expressed in this article are those of the authors and do not reflect the official policy or position of the United States Air Force, Department of Defense, or the U.S.~Government.

\end{document}